\documentclass[10pt, a4paper, reqno]{amsart}
\pdfoutput=1
\usepackage{amsmath,amsfonts,amssymb,amsthm,color}  
\usepackage{mathrsfs}

\usepackage[utf8]{inputenc}
\usepackage{graphicx}

\usepackage{marginnote}




\theoremstyle{plain}
\newtheorem{theorem}{Theorem}[section]

\newtheorem{lemma}[theorem]{Lemma}
\newtheorem{proposition}[theorem]{Proposition}

\theoremstyle{definition}

\newtheorem{remark}[theorem]{Remark}

\numberwithin{equation}{section}
\newtheorem*{theorem*}{Theorem}

\newcommand{\R}{{\mathbb R}}
\newcommand{\N}{{\mathbb N}}

\newcommand{\CM}{{\mathcal{M}}}
\newcommand{\CB}{{\mathcal{B}}}
\newcommand{\CR}{{\mathcal{R}}}

%
%
%
%
\def\vint_#1{\mathchoice%
          {\mathop{\kern 0.2em\vrule width 0.6em height 0.69678ex depth -0.58065ex
                  \kern -0.8em \intop}\nolimits_{\kern -0.4em#1}}%
          {\mathop{\kern 0.1em\vrule width 0.5em height 0.69678ex depth -0.60387ex
                  \kern -0.6em \intop}\nolimits_{#1}}%
          {\mathop{\kern 0.1em\vrule width 0.5em height 0.69678ex
              depth -0.60387ex
                  \kern -0.6em \intop}\nolimits_{#1}}%
          {\mathop{\kern 0.1em\vrule width 0.5em height 0.69678ex depth -0.60387ex
                  \kern -0.6em \intop}\nolimits_{#1}}}
\def\vintslides_#1{\mathchoice%
          {\mathop{\kern 0.1em\vrule width 0.5em height 0.697ex depth -0.581ex
                  \kern -0.6em \intop}\nolimits_{\kern -0.4em#1}}%
          {\mathop{\kern 0.1em\vrule width 0.3em height 0.697ex depth -0.604ex
                  \kern -0.4em \intop}\nolimits_{#1}}%
          {\mathop{\kern 0.1em\vrule width 0.3em height 0.697ex depth -0.604ex
                  \kern -0.4em \intop}\nolimits_{#1}}%
          {\mathop{\kern 0.1em\vrule width 0.3em height 0.697ex depth -0.604ex
                  \kern -0.4em \intop}\nolimits_{#1}}}

\newcommand{\intav}{\vint}
\newcommand{\aveint}[2]{\mathchoice%
          {\mathop{\kern 0.2em\vrule width 0.6em height 0.69678ex depth -0.58065ex
                  \kern -0.8em \intop}\nolimits_{\kern -0.45em#1}^{#2}}%
          {\mathop{\kern 0.1em\vrule width 0.5em height 0.69678ex depth -0.60387ex
                  \kern -0.6em \intop}\nolimits_{#1}^{#2}}%
          {\mathop{\kern 0.1em\vrule width 0.5em height 0.69678ex depth -0.60387ex
                  \kern -0.6em \intop}\nolimits_{#1}^{#2}}%
          {\mathop{\kern 0.1em\vrule width 0.5em height 0.69678ex depth -0.60387ex
                  \kern -0.6em \intop}\nolimits_{#1}^{#2}}}


\def\XXint#1#2#3{{\setbox0=\hbox{$#1{#2#3}{\int}$}
\vcenter{\hbox{$#2#3$}}\kern-.5\wd0}}

\newcommand{\dy}{\, \mathrm{d}y}
\newcommand{\dx}{\, \mathrm{d}x}
\newcommand{\dt}{\, \mathrm{d}t}

\title[Continuity fractional maximal function]{Endpoint Sobolev continuity of the fractional maximal function in higher dimensions}

\thanks{The first author was supported by the ERCEA Advanced Grant 2014 669689 - HADE, by the MINECO project MTM2014-53850-P, by Basque Government project IT-641-13 and also by the Basque Government through the BERC 2018-2021 program and by Spanish Ministry of Economy and Competitiveness MINECO: BCAM Severo Ochoa excellence accreditation SEV-2017-0718. The second author acknowledges the UCLA Department of Mathematics for support.}

\author{David Beltran and José Madrid}
\date{\today}

\address{David Beltran, Basque Center for Applied Mathematics (BCAM), Alameda de Mazarredo 14, 48009, Bilbao, Basque Country, Spain}
	\email{dbeltran@bcamath.org}
	
\address{José Madrid, Department of  Mathematics,  University  of  California,  Los  Angeles (UCLA),  Los  Angeles,
California, 90024, USA}
\email{jmadrid@math.ucla.edu}

\keywords{Maximal function, Sobolev spaces, Continuity}
\subjclass[2010]{42B25, 46E35}

\begin{document}

\begin{abstract}
We establish continuity mapping properties of the non-centered fractional maximal operator $M_{\beta}$ in the endpoint input space $W^{1,1}(\R^d)$ for $d \geq 2$ in the cases for which its boundedness is known. More precisely, we prove that for $q=d/(d-\beta)$ the map $f \mapsto |\nabla M_\beta f|$ is continuous from $W^{1,1}(\R^d)$ to $L^{q}(\R^d)$ for $ 0 < \beta < 1$ if $f$ is radial and for $1 \leq \beta < d$ for general $f$. The results for $1\leq \beta < d$  extend to the centered counterpart $M_\beta^c$. Moreover, if $d=1$, we show that the conjectured boundedness of that map for $M_\beta^c$ implies its continuity.
\end{abstract}

\maketitle




\section{Introduction}
Given $f\in L^1_{loc}(\R^d)\,$ and $0\leq \beta<d\,$, the non-centered fractional Hardy-Littlewood maximal operator $M_{\beta}$ is defined by
\begin{equation*}
M_{\beta} f(x):=\sup_{\bar B(z,r)\ni x}
\frac{r^\beta}{|B(z,r)|}\int_{B(z,r)}|f(y)|\dy
\end{equation*}
for every $x\in\R^d\,$. The centered version of $M_{\beta}$, denoted by $M^{c}_\beta$, is defined by taking the supremum over all balls centered at $x$. The non-fractional case $\beta=0$ corresponds to the classical maximal function, which we denote by $M=M_0$.

In recent years, there has been considerable interest in understanding the regularity properties of $M$ and $M_\beta$. This study was initiated by Kinnunen \cite{Kinnunen1997}, who showed that if $f \in W^{1,p}(\R^d)$ with $1 < p <\infty$, then $Mf \in W^{1,p}(\R^d)$ and
\begin{equation}\label{eq:Kinnunen}
|\nabla M f(x)| \leq M(|\nabla f|)(x)
\end{equation}
almost everywhere in $\R^d$. His result extends in a straightforward way to the fractional case in the scaling line $\frac{1}{q}=\frac{1}{p} - \frac{\beta}{d}$; more generally, any $L^p-L^q$ bounded sublinear operator $A$ on $\R^d$ that commutes with translations preserves the boundedness at the derivative level if $1<p,q< \infty$, that is
$$
\| A f \|_{1,q} \leq C \|f \|_{1,p}.
$$
At the endpoint $p=1$, one cannot expect boundedness of $M_\beta$ from $W^{1,1}$ to $W^{1,\frac{d}{d-\beta}}$ to hold, as $M_\beta$ fails to be bounded at the level of Lebesgue spaces. However, one may still ask the question of whether the map $f \mapsto |\nabla M_\beta f|$ is bounded from $W^{1,1}$ to $L^{\frac{d}{d-\beta}}$. This problem has received a lot of attention in recent years and in the case $\beta=0$ is commonly referred to as the $W^{1,1}$--problem. In this case, despite the question is still open, there are positive results for $d=1$ \cite{Tanaka2002,AP2007,Kurka2010} and for $d>1$ if the function $f$ is radial \cite{Luiro2017}; see also \cite{HM2010, CS2013, CFS2015, Saari2016, Ramos2017, PPSS2017, Korry} for related results and \cite{BCHP2012, CH2012, Madrid2016} for similar results in the discrete setting. In the fractional case $0<\beta<d$, it was observed by Carneiro and the second author \cite{CM2015} that the case $\beta \geq 1$ follows from combining Sobolev embeddings with the following smoothing property of fractional maximal functions due to Kinnunen and Saksman \cite{KS2003}: if $f \in L^p(\R^d)$ with $1<p<d$ and $1 \leq \beta < d/p$, then
\begin{equation}\label{KS}
|\nabla M_\beta f(x)| \leq C M_{\beta-1} f(x)
\end{equation}
almost everywhere in $\R^d$. Together with the boundedness of $M_{\beta-1}$ and the Gagliardo--Nirenberg--Sobolev inequality,
$$
\| \nabla M_\beta f \|_{q} \leq C \|M_{\beta-1} f \|_q \leq C \| f \|_{\frac{d}{d-1}} \leq C \| \nabla f \|_1
$$
for $q=\frac{d}{d-\beta}$, establishing the endpoint Sobolev bound for $\beta \geq 1$. Here and in \eqref{KS} the results continue to hold for $M_\beta^c$.

The case $0<\beta < 1$ is considerably more difficult. The one dimensional case was established by Carneiro and the second author \cite{CM2015}, whilst in higher dimensions Luiro and the second author \cite{LM2017} proved its validity for radial functions. More recently, the first author, Ramos and Saari \cite{BRS2018} obtained the boundedness result for $d \geq 2$ without the radial hypothesis but for certain variants of $M_\beta$. Such variants correspond to a lacunary version of the maximal function $M_\beta$ and to maximal functions of convolution type with smoother kernels than $\chi_{B(0,1)}$. 

The maximal functions $M_\beta$ are sublinear operators, and therefore its boundedness on Lebesgue spaces implies its continuity. However, this property is not preserved at the derivative level: the map $f \mapsto |\nabla M_\beta f|$ is no longer sublinear. Therefore, it is a non-trivial question to determine the continuity of $f \mapsto |\nabla M_\beta f|$ as a map from $W^{1,p}(\R^d)$ to $L^{q}(\R^d)$. This question was first posed by Haj\l asz and Onninen \cite{HO2004}, where it was attributed to Iwaniec. The first affirmative results in this direction were obtained by Luiro \cite{Luiro2007} for $\beta=0$ in the non-endpoint cases $p>1$, although his analysis extends to the fractional setting; see also his work \cite{Luiro2018} for more general maximal operators in non-endpoint cases, which includes an interesting result for $M_\beta$ in the case $1 \leq \beta < d$.

In analogy to the boundedness problem, the continuity at the endpoint $p=1$ is a much subtler question. In recent years, there has been progress in this direction for $d=1$: Carneiro, the second author and Pierce \cite{CMP2017} established the continuity for $d=1$ and $\beta=0$, and the second author \cite{Madrid2017} showed the analogous result for $d=1$ and $0< \beta <1$. The main goal of this paper is to explore the analogous questions in higher dimensions for the cases in which the boundedness of the map $f \mapsto |\nabla M_\beta f|$ from $W^{1,1}$ to $L^q$ is known. In particular, we obtain positive results for the fractional case. Similarly to the boundedness, our analysis naturally splits in two cases depending on whether $0 < \beta < 1$ or $1 \leq \beta < d$; this is dictated by the availability of \eqref{KS} in the latter case.

\begin{theorem}\label{thm:beta bigger than 1}
Let $\mathcal{M}_\beta \in \{M_\beta, M_\beta^c\}$. If $1 \leq \beta < d$, the operator $f \mapsto |\nabla \mathcal{M}_\beta f|$ maps continuously $W^{1,1}(\R^d)$ into $L^{d/(d-\beta)}(\R^d)$.
\end{theorem}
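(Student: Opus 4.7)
The plan is to establish strong $L^q$ convergence $\nabla \mathcal{M}_\beta f_n \to \nabla \mathcal{M}_\beta f$ with $q = d/(d-\beta)$ whenever $f_n \to f$ in $W^{1,1}(\R^d)$; the stated continuity then follows from the reverse triangle inequality applied pointwise and in $L^q$. The argument rests on three ingredients: a uniform pointwise majorant from the Kinnunen--Saksman inequality, convergence of that majorant in $L^q$, and a.e.\ convergence of the gradients.

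\textbf{Majorant and its $L^q$ convergence.} By \eqref{KS}, $|\nabla \mathcal{M}_\beta f_n|(x) \leq C M_{\beta-1} f_n(x)$ a.e. Since $f_n \to f$ in $W^{1,1}(\R^d)$ and $d \geq 2$, the Gagliardo--Nirenberg--Sobolev inequality gives $f_n \to f$ in $L^{d/(d-1)}(\R^d)$. The operator $M_{\beta-1}$ is sublinear and, by the Hardy--Littlewood inequality (when $\beta = 1$) or the Hardy--Littlewood--Sobolev inequality (when $1 < \beta < d$), maps $L^{d/(d-1)}$ boundedly into $L^q$; the scaling $\tfrac{d-1}{d} - \tfrac{\beta-1}{d} = \tfrac{d-\beta}{d}$ matches, and the required condition $d/(d-1) < d/(\beta-1)$ is equivalent to $\beta < d$. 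Being bounded and sublinear, $M_{\beta-1}$ is continuous between these spaces, so $M_{\beta-1} f_n \to M_{\beta-1} f$ in $L^q(\R^d)$.

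\textbf{A.e.\ convergence of the gradients.} Passing to a subsequence, I would show $\nabla \mathcal{M}_\beta f_n(x) \to \nabla \mathcal{M}_\beta f(x)$ for a.e.\ $x \in \R^d$. The approach is via Luiro's framework \cite{Luiro2007, Luiro2018}: at a.e.\ $x$, $\mathcal{M}_\beta f$ is differentiable at $x$ and $\nabla \mathcal{M}_\beta f(x)$ admits a representation in terms of a selection of balls $B(z(x), r(x))$ realizing (or nearly realizing) the supremum defining $\mathcal{M}_\beta f(x)$. Since $f_n \to f$ in $L^p$ for every $p \in [1, d/(d-1)]$, the corresponding near-optimal balls for $\mathcal{M}_\beta f_n$ at $x$ accumulate in the Hausdorff sense at optimal balls for $\mathcal{M}_\beta f$ at $x$, and the gradient formula passes to the limit by dominated convergence on each ball.

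\textbf{Conclusion via a Vitali-type argument.} From \eqref{KS} applied to $f_n$ and $f$ together with the triangle inequality,
\[
\bigl|\nabla \mathcal{M}_\beta f_n - \nabla \mathcal{M}_\beta f\bigr|^{q} \leq 2^q C^q \bigl(M_{\beta-1} f_n + M_{\beta-1} f\bigr)^{q} \quad \text{a.e.}
\]
The right-hand side converges in $L^1$ to $(2C M_{\beta-1} f)^q$ by the first step, while the left-hand side converges to $0$ a.e.\ by the second. The dominated convergence theorem with varying majorant (equivalently, Vitali's convergence theorem, as convergence of the dominants in $L^1$ yields uniform integrability of their pointwise upper bounds) then yields $\nabla \mathcal{M}_\beta f_n \to \nabla \mathcal{M}_\beta f$ in $L^q(\R^d)$. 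Since both $M_\beta$ and $M_\beta^c$ satisfy \eqref{KS} under the same hypotheses, the argument treats $\mathcal{M}_\beta \in \{M_\beta, M_\beta^c\}$ uniformly. The main obstacle is the a.e.\ gradient convergence: the map $f \mapsto \nabla \mathcal{M}_\beta f$ is not sublinear, so continuity is not automatic from boundedness, and one needs a careful stability analysis of the selection of optimal balls in the spirit of \cite{Luiro2007}.
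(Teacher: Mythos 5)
Your proposal is correct and follows essentially the same route as the paper: the Kinnunen--Saksman majorant $|\nabla \mathcal{M}_\beta f_j|\leq C M_{\beta-1}f_j$, convergence of that majorant in $L^{d/(d-\beta)}$ via sublinearity and Gagliardo--Nirenberg--Sobolev, a.e.\ convergence of the gradients through stability of the optimal balls and Luiro's derivative formula, and a generalised dominated convergence (Vitali-type) argument to conclude. The only part you leave as a sketch is the a.e.\ gradient convergence, which the paper carries out in detail via the compactness of good balls (Lemma \ref{lemma:limit of good balls}), the $W^{1,1}$-convergence of $|f_j|$ to $|f|$ (Lemma \ref{lemma:convergence modulus in W11}, needed because Luiro's formula involves $\nabla|f|$), and Lemma \ref{lemma:ae convergence derivatives}; your outline correctly identifies exactly these ingredients.
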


The range $0 < \beta < 1$ is more interesting as the inequality \eqref{KS} is no longer at our disposal. However, we are able to give positive results for radial functions; note that boundedness of $M_\beta$ at its derivative level is currently only known under this assumption. This constitutes the main result of this paper.\footnote{The space $W^{1,1}_{\mathrm{rad}}$ in Theorem \ref{thm:beta smaller than 1} denotes the subspace of $W^{1,1}$ consisting of radial functions.}

\begin{theorem}\label{thm:beta smaller than 1}
If $0 <\beta < 1$, the operator $f \mapsto |\nabla M_\beta f|$ maps continuously $W^{1,1}_{\mathrm{rad}}(\R^d)$ into $L^{d/(d-\beta)}(\R^d)$.
\end{theorem}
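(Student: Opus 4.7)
The plan is to combine the radial reduction of Luiro and the second author~\cite{LM2017}, which underlies the boundedness result, with a pointwise convergence analysis of the derivatives in the spirit of~\cite{Madrid2017}. Since \eqref{KS} is unavailable in this regime, we must track the structure of the optimal ball realising the supremum in $M_\beta f$ directly. Let $\{f_n\} \subset W^{1,1}_{\mathrm{rad}}(\R^d)$ converge to $f$ in $W^{1,1}(\R^d)$. A standard subsequence argument reduces the problem to showing that a subsequence of $\{|\nabla M_\beta f_n|\}$ converges to $|\nabla M_\beta f|$ in $L^{d/(d-\beta)}(\R^d)$; by further extraction, we may additionally assume $f_n \to f$ almost everywhere.

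Write $F_n(r) = f_n(re_1)$, $F(r) = f(re_1)$, and set $u_n(r) = M_\beta f_n(re_1)$, $u(r) = M_\beta f(re_1)$. Since $M_\beta f_n$ and $M_\beta f$ are radial, the reduction of~\cite{LM2017} recasts $u_n$ and $u$ as one-dimensional suprema over intervals $[a,b] \subset [0,\infty)$ containing $r$, with averaging against the measure $s^{d-1}\,\mathrm{d}s$; polar coordinates then identify
\[
\int_{\R^d} |\nabla M_\beta f|^{d/(d-\beta)}\,\mathrm{d}x = \omega_{d-1} \int_0^\infty |u'(r)|^{d/(d-\beta)} r^{d-1}\,\mathrm{d}r,
\]
and analogously for each $f_n$. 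The target statement is thus equivalent to a weighted one-dimensional continuity statement for the derivatives of these averaged supremum operators.

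The core step is to establish pointwise a.e. convergence $u_n'(r) \to u'(r)$. At each $r$ where $u$ is differentiable, a Luiro-type formula expresses $u'(r)$ in terms of the optimal interval $[a(r), b(r)]$ realising the supremum defining $u(r)$, and analogously $u_n'(r)$ via $[a_n(r), b_n(r)]$. The plan is to show that, after passing to yet another subsequence, $[a_n(r), b_n(r)] \to [a(r), b(r)]$ for a.e. $r$; combined with the $L^1_{\mathrm{loc}}$ convergence of $F_n$ to $F$ and the a.e. convergence, this suffices to pass to the limit in the Luiro-type formulae. This mirrors the analysis performed in~\cite{Madrid2017} in the unweighted one-dimensional setting.

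The proof concludes by a Brezis--Lieb-type argument: the boundedness of~\cite{LM2017} yields uniform $L^{d/(d-\beta)}$ control on $\{|\nabla M_\beta f_n|\}$, and combined with a.e. convergence of the gradients, it suffices to verify $\|\nabla M_\beta f_n\|_{d/(d-\beta)} \to \|\nabla M_\beta f\|_{d/(d-\beta)}$. Fatou's lemma provides the lower bound, while the matching upper bound should follow from dominating $|u_n'|$ pointwise by an integrable majorant built out of $|F_n'|$ via the one-dimensional Hardy--Littlewood-type estimates of~\cite{LM2017}, and invoking the Vitali convergence theorem. The main obstacle will lie precisely in the analysis of the best intervals: proving their stability under passage to the limit and handling degenerate values of $r$ where the optimal interval is not unique—particularly near $r = 0$, where the weight $r^{d-1}$ degenerates and several distinct optimal configurations may coexist.
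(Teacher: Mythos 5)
Your high-level architecture (a.e.\ convergence of the gradients via stability of the optimal balls, followed by a Br\'ezis--Lieb reduction to convergence of the $L^{q}$ norms) coincides with the paper's, and the stability step you describe is essentially Lemma \ref{lemma:limit of good balls} combined with Luiro's formula (Lemma \ref{lemma:derivative Mbeta}), which is sound. The genuine gap is in the step you leave as ``should follow'': producing an integrable majorant for $|\nabla M_\beta f_n|^{q}$ (equivalently, justifying the Vitali/dominated-convergence step). That is where all the work lies, and the route you sketch for it does not go through.

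First, the reduction you invoke is not accurate: for radial $f$, the average of $|f|$ over a ball $B(z,r)$ with $z$ on the ray through $x$ is a weighted average of the profile over an interval, but the weight is the surface measure of the slices $B(z,r)\cap\partial B(0,s)$, not $s^{d-1}$; so $M_\beta f$ is not the one-dimensional weighted maximal operator you write down, and the problem is not ``equivalent'' to your stated 1D statement. Second, and more seriously, mirroring \cite{Madrid2017} requires uniform convergence of $M_\beta f_n$ to $M_\beta f$ (this is what produces uniform lower bounds on the optimal radii and hence a pointwise bound on $|u_n'|$ in terms of $\|F_n'\|_{1}$); in $d=1$ this is free from $W^{1,1}$ convergence, but for $d\ge 2$ and $\beta<d-1$ it fails --- the paper only recovers $L^\infty$ convergence on annuli $A(a,b)$ away from the origin, and only for the truncated operator $M_\beta^I$ with radii $r\le |x|/4$ (Lemma \ref{MI uniformity on rings}); this is exactly the obstruction explained in Appendix \ref{app: (d-1,d)}. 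Consequently there is no single majorant ``built out of $|F_n'|$'': the paper must split according to whether the optimal ball has small radius (handled via $M_\beta^I$, with a separate $\varepsilon$-argument near the origin, Lemma \ref{MI smallnes around 0}), or large radius and lies in $B(0,|x|)$ (majorant $v_j$, via Lemma \ref{lemma:ye}), or large radius and lies in $B(0,|x|)^c$ (majorant $u_j$), and it must additionally treat the complement of a large compact set by an entirely different mechanism, namely the refined Kinnunen--Saksman bound $|\nabla M_\beta f(x)|\lesssim r_x^{-1}M_\beta f(x)$ of Lemma \ref{lemma:refinement KS} together with H\"older (Proposition \ref{prop:smallness}). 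Your plan flags the degeneracy at $r=0$ but says nothing about the region near infinity, and the majorant construction you defer is precisely the missing substance of the proof.
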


The proof of this theorem differs from its one dimensional counterpart, which strongly uses that $M f$ and $M_\beta f$ are in $L^\infty(\R)$ if $f \in W^{1,1}(\R)$. In fact, the one-dimensional arguments will only continue to work in higher dimensions in the restricted range $d-1 < \beta < d$ which is, in particular, covered by Theorem \ref{thm:beta bigger than 1}, or inside an annulus $A(a,b)$ with $0<a<b<\infty$ in the case we consider small radii. Our approach will combine the one-dimensional arguments in $A(a,b)$ and a refinement of the techniques used in \cite{LM2017} to show the bound $\| \nabla M_\beta f \|_{d/(d-\beta)} \leq C \| \nabla f \|_{1}$ for radial functions.

Moreover, our arguments can be combined with those in \cite{Madrid2017} to yield a conjectural result in one dimension regarding the continuity of the map $f \mapsto |(M_\beta^c f)'|$ from $W^{1,1}(\R)$ to $L^{1/(1-\beta)}(\R)$. Our result depends upon the boundedness of that map between such function spaces, which is currently an open question.

\begin{theorem}\label{thm:d=1}
Let $0 < \beta < 1$. Assume that $\| (M^{c}_\beta f)' \|_{L^{q}(\R)} \leq C \|f' \|_{L^1(\R)}$ holds for $q=1/(1-\beta)$. Then the operator $f \mapsto |(M_\beta^c f)'|$ maps continuously $W^{1,1}(\R)$ into $L^q(\R)$.
\end{theorem}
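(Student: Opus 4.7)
The plan is to adapt the one-dimensional non-centered argument of \cite{Madrid2017} to the centered operator $M^{c}_\beta$, using the assumed endpoint bound in place of its independently established non-centered analogue. Let $\{f_n\}\subset W^{1,1}(\R)$ converge to $f$ in $W^{1,1}$. Since $M^{c}_\beta g = M^{c}_\beta |g|$ and $\||g|\|_{W^{1,1}(\R)}=\|g\|_{W^{1,1}(\R)}$ in one dimension, I may assume $f_n,f\geq 0$. The Sobolev embedding $W^{1,1}(\R)\hookrightarrow C_0(\R)$ then gives $f_n\to f$ uniformly, whence $M^{c}_\beta f_n\to M^{c}_\beta f$ uniformly as well. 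The goal is to upgrade this to convergence of derivatives in $L^q(\R)$ with $q=1/(1-\beta)$.

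The first key step is pointwise a.e.\ convergence of the derivatives via a Luiro-type representation. For $f \ge 0$ in $W^{1,1}(\R)$, setting $F(y)=\int_0^y f$, the map $r\mapsto \Phi_f(x,r):=\tfrac{r^{\beta-1}}{2}(F(x+r)-F(x-r))$ vanishes as $r\to 0^+$ (since $\beta>0$ and $f\in L^\infty$) and as $r\to\infty$ (since $\beta<1$ and $f\in L^1$), so the supremum defining $M^{c}_\beta f(x)$ is attained at some $r(x)\in(0,\infty)$. At each $x$ where the maximizer is unique, the envelope theorem produces
\[
(M^{c}_\beta f)'(x) = \frac{r(x)^{\beta-1}}{2}\bigl(f(x+r(x)) - f(x-r(x))\bigr),
\]
with an analogous formula for $(M^{c}_\beta f_n)'$ involving optimal radii $r_n(x)$. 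I would then show (i) uniqueness of the optimal radius for a.e.\ $x$, and (ii) the stability $r_n(x)\to r(x)$ for a.e.\ $x$. Combined with the uniform convergence $f_n\to f$, these yield $(M^{c}_\beta f_n)'(x)\to (M^{c}_\beta f)'(x)$ for a.e.\ $x\in\R$.

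The second key step is convergence of the $L^q$ norms. Fatou's lemma provides $\|(M^{c}_\beta f)'\|_q\leq \liminf_n \|(M^{c}_\beta f_n)'\|_q$. For the matching upper bound I would mimic the dichotomy of \cite{Madrid2017}: partition $\R$ according to whether $r_n(x)\leq R$ or $r_n(x)>R$ for a large threshold $R$. On the first region the pointwise formula and the uniform convergence $f_n\to f$ (together with the bound $r(x)\ge \rho$ on compact sets of continuity) allow one to pass to the limit by dominated convergence on a function bounded by a multiple of $f_n'$ in $L^1$. On the second region the assumed bound $\|(M^{c}_\beta f_n)'\|_q\leq C\|f_n'\|_1$, together with equi-integrability of $\{f_n'\}$ and the decay of $r^{\beta-1}$ for large $r$, yields tail estimates that vanish as $R\to\infty$ uniformly in $n$. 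Letting $n\to\infty$ and then $R\to\infty$ delivers $\limsup_n \|(M^{c}_\beta f_n)'\|_q\leq \|(M^{c}_\beta f)'\|_q$, hence equality in the limit.

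Once a.e.\ convergence and $L^q$-norm convergence are in place, the Brezis--Lieb lemma yields $(M^{c}_\beta f_n)'\to (M^{c}_\beta f)'$ in $L^q(\R)$, finishing the proof. The main obstacle will be the a.e.\ uniqueness and stability of the optimal radius under $W^{1,1}$ perturbations. The non-centered setting of \cite{Madrid2017} enjoys translation freedom of the optimizing interval, producing a rich family of maximizers that greatly simplifies this step; in the centered case only the radius varies, so I would analyze the critical-point equation $(1-\beta)(F(x+r)-F(x-r))=r(f(x+r)+f(x-r))$ and, via a Sard-type argument combined with the absolute continuity inherent in $W^{1,1}$, show that the set of $x$ admitting multiple maximizers has Lebesgue measure zero, and that a measurable selection $r(x)$ can be made to depend stably on $f$ in the $W^{1,1}$ topology.
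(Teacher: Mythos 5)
Your overall architecture (a.e.\ convergence of the derivatives via a Luiro-type formula, plus convergence of the $L^q$ norms, plus Br\'ezis--Lieb) matches the paper's framework, and the use of the one-dimensional Sobolev embedding to get uniform convergence of $f_n$ and of $M^c_\beta f_n$ is exactly the right starting point. However, two of your steps do not close as described. The serious one is the tail estimate on the region $\{x: r_n(x)>R\}$. On that set the pointwise bounds you have available are of $L^\infty$ type: $|(M^c_\beta f_n)'(x)|\lesssim r_n(x)^{\beta-1}\|f_n'\|_{1}$ or $\lesssim M^c_\beta f_n(x)/r_n(x)\lesssim R^{\beta-1}\|f_n\|_1$. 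But $\{r_n(x)>R\}$ can have infinite Lebesgue measure, and $M^c_\beta f_n$ is \emph{not} in $L^q(\R)$ (its tail decays like $|x|^{\beta-1}$, which just fails to be $q$-integrable for $q=1/(1-\beta)$), so neither a sup bound nor H\"older against $\|M^c_\beta f_n\|_{L^q}$ gives a vanishing $L^q$ contribution as $R\to\infty$. What is actually needed is a \emph{spatial} decomposition relative to a compact set $K$: outside $3K$ one distinguishes whether the optimal ball meets $K$ or not. If it misses $K$ one replaces $f_n$ by $f_n\chi_{K^c}$, whose $W^{1,1}$ norm is small, and invokes the assumed endpoint bound; if it meets $K$ one gets the genuine spatial decay $|(M^c_\beta f_n)'(x)|\lesssim M^c_\beta f_n(x)/|x|$, and then H\"older with an auxiliary exponent $r>q$ (using $M^c_\beta: L^p\to L^r$ for some $p\in(1,\infty)$) makes the weight $|x|^{-qr/(r-q)}$ integrable at infinity. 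This is the content of the paper's Proposition \ref{prop:smallness}, which is precisely the ingredient that replaces the monotonicity tricks of \cite{Madrid2017} that fail for the centered operator; your radius-only dichotomy cannot substitute for it. Relatedly, your ``first region'' $\{r_n(x)\le R\}$ also extends to spatial infinity, where no uniform lower bound $r_n(x)\ge\rho$ is available (that bound comes from $\inf_K M^c_\beta f>0$ and only holds on compacta), so the dominated convergence step there is likewise only valid after the spatial truncation.

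The second issue is a misdiagnosis rather than a fatal gap: you identify a.e.\ uniqueness of the maximizing radius, to be obtained by a Sard-type argument, as the main obstacle. This is unnecessary. Luiro's formula $(M^c_\beta f)'(x)=\tfrac{1}{2}r^{\beta-1}\big(|f|(x+r)-|f|(x-r)\big)$ holds for \emph{every} good radius $r\in\CR^\beta_x$, all of which produce the same value (namely the approximate derivative, which coincides with the weak derivative once the assumed bound guarantees weak differentiability). Consequently all one needs is the compactness statement that accumulation points of good radii for $f_n$ are good radii for $f$ (Lemma \ref{lemma:limit of good balls}), together with the uniform lower bound on radii on compact sets; this already forces every convergent subsequence of $(M^c_\beta f_n)'(x)$ to converge to $(M^c_\beta f)'(x)$, as in Lemma \ref{lemma:ae convergence derivatives}. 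Dropping the uniqueness/Sard step and importing the compact-exterior decomposition would bring your argument in line with a complete proof.
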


Finally, it is noted that some of our arguments also continue to work without the radial assumption, for $\beta=0$ and for the centered maximal function. In particular, the analysis can always be reduced to showing the continuity inside a compact set $K$; this will be discussed in Section \ref{subsec:outside compact}.
\\

\textit{Structure of the paper.} Section \ref{sec:preliminaries} contains many auxiliary results that will be used in the proofs of the main theorems. The proofs of Theorems \ref{thm:beta bigger than 1},  \ref{thm:beta smaller than 1} and \ref{thm:d=1} are provided in Sections \ref{sec:beta big}, \ref{sec:beta small} and \ref{sec:proof thm d=1} respectively. Finally, an alternative proof for the range $\beta \in (d-1 , d)$ based on a one dimensional analysis will be provided in an Appendix.
\\

\textbf{Acknowledgements.} The authors are indebted to Hannes Luiro for a clarification regarding his previous work \cite{Luiro2007} and to Emanuel Carneiro, Cristian González and Juha Kinnunen for valuable comments. They also would like to thank BCAM, ICTP and UCLA for supporting research visits that helped to the development of this project. The second author would like to thank Carlos Pérez for his hospitality during his visit to BCAM. The authors are also grateful to the referee for the valuable suggestions.




\section{Preliminaries}\label{sec:preliminaries}




\subsection*{Notation}

Throghout this paper, the value of the Lebesgue exponent $q$ will always be $q=d/(d-\beta)$. Given a measurable set $E \subset \R^d$, $\chi_E$ denotes the characteristic function of $E$ and $E^c:=\R^d \backslash E$  its complementary set in $\R^d$. For $c \in \R$ , we denote by $cE$ the concentric set to $E$ dilated by $c$. The integral average of $f \in L^1_{loc}(\R^d)$ over $E$ is denoted by $f_E=\intav_E f$. The notation $A \lesssim B$ is used if there exists $C>0$ such that $A \leq C B$, and similarly $A \gtrsim B$ and $A \sim B$. The implicit constant may change from line to line but will be always independent of the relevant parameters (such as the index $j$), and depend only on the dimension $d$ and the fractional order $\beta$. The volume of the $d$-dimensional unit ball is denoted by $\omega_d$.




\subsection{The families of good balls and good radii}

Fix $0 \leq \beta < d$. Given a function $f \in W^{1,1}(\R^d)$ and a point $x \in \R^d$, define the family of \textit{good balls} for $f$ at $x$ as
$$
\CB_x^\beta (f):=\Big\{ B(z,r): x \in \bar B(z,r), \:M_\beta f (x)= r^\beta \intav_{B(z,r)} |f(y)| \dy \Big\}.
$$
Note that $\CB_x^\beta (f) \neq \emptyset$ for all $x \in \R^d$ if $f \in L^1(\R^d)$.  Moreover, $\CB_x^\beta(f)$ is a compact set in the sense that if $B(z_k,r_k) \in \CB_x^\beta(f)$ for all $k \in \N$ and $z_k \to z$ and $r_k \to r$ as $k \to \infty$, then $B(z,r) \in \CB_x^\beta(f)$.

For ease of notation, $\CB_x^\beta (f)$ will be simply denoted by $\CB_x^\beta$, and given a sequence of functions $\{f_j\}_{j \in \N}$ the associated families of good balls  are denoted by $\CB_{x,j}^\beta$. The families of \textit{good radii} $\CR_{x}^\beta$ and $\CR_{x,j}^\beta$ are defined as the subsets of $\R$ consisting of the radii associated to good balls in $\CB_x^\beta$ and $\CB_{x,j}^\beta$ respectively.

If $0< \beta < d$, the value $r=0 \not \in \CR_{x}^\beta$ for almost every $x \in \R^d$. This is indeed a simple consequence of the Lebesgue differentiation theorem. Assume that $\{B(z_k,r_k)\}_{k \in \N}$ is a family of balls containing $x$ such that $r_k \to 0$; then $\{x\}=B(x,0) \in \CB_x^\beta $ by compactness. By Lebesgue differentiation theorem
$$
r_k^\beta \intav_{B(z_k,r_k)} |f| \to 0 \times f(x) = 0 \quad \textrm{ a.e. \:\: as $k \to \infty$},
$$
but $M_\beta f (x)>0$ for any $f$ not identically zero.

If $\beta=0$, a similar argument yields that $0 \not \in \CR^0_x$ on the set $\{x \in \R^d : Mf(x)>f(x)\}$.

An important observation is the following relation between the sets $\CB_{x.j}^\beta$ and $\CB_x^\beta$, which constitutes the fractional higher dimensional analogue of Lemma 12 in \cite{CMP2017}.

\begin{lemma}\label{lemma:limit of good balls}
Let $f \in W^{1,1}(\R^d)$ and $\{f_j\}_{j \in \N} \subset W^{1,1}(\R^d)$ be such that $\| f_j - f \|_{W^{1,1}(\R^d)} \to 0$ as $j \to \infty$. For a.e. $x \in \R^d$, let $\{(z_j, r_j)\}_{j \in \N} \subset \R^d \times [0,\infty)$ be a sequence of centers and radii such that  $B_{x,j}=B(z_j,r_j) \in \CB_{x,j}^\beta$. If $(z, r)$ is an accumulation point of $\{(z_j, r_j)\}_{j \in \N}$, then $B(z,r) \in \CB_x^\beta $.
\end{lemma}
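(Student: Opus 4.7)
The plan is to pass to a subsequence along which $(z_j, r_j) \to (z, r)$ and then verify both defining conditions of $\CB_x^\beta$ for $B(z,r)$. The containment $x \in \bar B(z,r)$ is immediate: passing to the limit in $|x - z_j| \leq r_j$ gives $|x - z| \leq r$. The substantive content is the identity $M_\beta f(x) = r^\beta \intav_{B(z,r)} |f| \dy$, which I would establish via two opposing inequalities; crucially, this avoids ever needing pointwise convergence $M_\beta f_j(x) \to M_\beta f(x)$, only the supremum-defining inequality of the maximal function.

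In the non-degenerate case $r > 0$, I would first prove
$$
r_j^\beta \intav_{B(z_j,r_j)} |f_j| \dy \longrightarrow r^\beta \intav_{B(z,r)} |f| \dy
$$
by splitting the integrand as $(|f_j| - |f|) + |f|$. The first contribution is at most $\omega_d^{-1} r_j^{\beta - d} \|f_j - f\|_1$, vanishing because $r_j \geq r/2$ eventually and $\|f_j - f\|_1 \to 0$. The second contribution converges by dominated convergence, since $\chi_{B(z_j,r_j)} \to \chi_{B(z,r)}$ almost everywhere (the boundary sphere is null) and $|f|$ is integrable, while $r_j^\beta / |B(z_j,r_j)| \to r^\beta / |B(z,r)|$. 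Next, maximality of $B_{x,j}$ for $f_j$ at $x$ gives $r_j^\beta \intav_{B(z_j,r_j)} |f_j| \dy \geq (r')^\beta \intav_{B'} |f_j| \dy$ for every admissible test ball $B' \ni x$; taking $j \to \infty$ via $L^1$ convergence on the right and then the supremum over $B'$ delivers $r^\beta \intav_{B(z,r)} |f| \dy \geq M_\beta f(x)$. The reverse inequality is automatic because $B(z,r)$ itself is admissible in the definition of $M_\beta f(x)$.

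For the degenerate case $r = 0$, I would assume $f \not\equiv 0$ (otherwise the lemma is trivial) and derive a contradiction. Now $z_j \to x$ and $r_j \to 0$, so $B(z_j, r_j) \subset B(x, 2r_j)$ yields $r_j^\beta \intav_{B(z_j,r_j)} |f| \dy \leq 2^d r_j^\beta \intav_{B(x, 2r_j)} |f| \dy \to 0$ at any Lebesgue point of $|f|$, using $\beta > 0$. The difference $r_j^\beta \intav_{B(z_j,r_j)} |f_j - f| \dy$ is at most $2^{d-\beta} M_\beta(f_j - f)(x)$; the weak type $(1, d/(d-\beta))$ bound for $M_\beta$, together with $f_j \to f$ in $L^1$, permits extracting a further subsequence along which $M_\beta(f_{j_l} - f)(x) \to 0$ almost everywhere. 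Hence $M_\beta f_{j_l}(x) \to 0$, and comparing with the lower bound $(r')^\beta \intav_{B'} |f_{j_l}| \dy$ for every admissible $B' \ni x$ forces $f \equiv 0$, a contradiction.

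The main obstacle I anticipate is the subsequence bookkeeping in the degenerate case, where the null set on which the conclusion can fail must be fixed independently of the given sequence $(z_j, r_j)$. I would handle this by taking the exceptional set to be the complement of the Lebesgue points of $|f|$, a fixed full-measure set depending only on $f$, and deferring the extraction of the further subsequence controlling $M_\beta(f_j - f)$ pointwise until after the sequence $(z_j, r_j)$ has been fixed. A closely related conceptual point is that a priori $M_\beta(f_j - f) \to 0$ only in weak $L^{d/(d-\beta)}$, which is why the main argument is arranged to avoid pointwise convergence of the maximal functions and instead rely solely on the defining inequality $M_\beta f(x) \geq (r')^\beta \intav_{B'} |f| \dy$.
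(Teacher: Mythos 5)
Your proposal is correct, and its skeleton matches the paper's: the same splitting of the good-ball average into a vanishing $\|f_j-f\|_{L^1}$ contribution plus a dominated-convergence term for the moving ball when $r>0$, and the same contradiction strategy (drive $M_\beta f_j(x)\to 0$ along a subsequence) when $r=0$. The genuine difference is how you close the case $r>0$: the paper deduces $M_\beta f_{j}(x)\to M_\beta f(x)$ by upgrading $\|f_j-f\|_{L^p}\to 0$ to $\|M_\beta f_j-M_\beta f\|_{L^{r}}\to 0$ for some $r>d/(d-\beta)$ and extracting an a.e.-convergent subsequence, whereas you avoid any convergence of the maximal functions altogether and instead sandwich: the limit of the good-ball averages dominates $(r')^\beta\intav_{B'}|f|$ for every fixed admissible ball $B'$ (hence dominates $M_\beta f(x)$ after taking the supremum), while $B(z,r)$ being admissible gives the reverse inequality. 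This is a little cleaner — it removes one subsequence extraction and one $x$-independent null set from the argument — and the supremum over $B'$ is legitimately attained through balls of positive radius, so the limit interchange is justified ball by ball. In the degenerate case your argument is essentially the paper's with $M_\beta(f_j-f)$ and the weak $(1,d/(d-\beta))$ bound in place of $M(f_j-f)$ and weak $(1,1)$; note that the factor $2^{d-\beta}$ is unnecessary, since $B(z_j,r_j)$ contains $x$ and is therefore itself admissible for the non-centered $M_\beta(f_j-f)(x)$, giving constant $1$. The bookkeeping subtlety you flag — that the sub-subsequence along which $M_\beta(f_{j_l}-f)(x)\to 0$ is extracted after $(z_j,r_j)$ has been fixed, so its exceptional null set is not specified in advance of $x$ — is real, but the paper's own device (the null set $\bigcup_{j_0}\bigcap_{j\geq j_0}A_j$ with $A_j=\{M(f_j-f)>1\}$) faces exactly the same interaction with the relabeled subsequence, so you are no worse off; if you want to be fully rigorous, argue at a fixed $x$ outside the paper's set $A$ that infinitely many indices $j$ satisfy $M(f_j-f)(x)\leq 1$ and intersect them with your subsequence, or replace the threshold $1$ by a sequence $\varepsilon_j\to 0$ with $\sum_j\varepsilon_j^{-1}\|f_j-f\|_1<\infty$ and invoke Borel--Cantelli to get a single $x$-independent null set for the full sequence.
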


\begin{proof}
Set $f_0=f$, and for every $j \geq 0$ let $E_j$ be the set of the Lebesgue points of $f_j$. Define $E=\cap_{j\geq0} E_{j}$; note $\R^{d}\setminus{E}$ is a set of measure zero. Consider a point $x\in E$ and assume, without loss of generality, that $(z_j, r_j) \to (z,r)$ as $j \to \infty$ (going through a subsequence, if necessary) and that $r \neq 0$. Note the convergence
\begin{align*}
\Big| M_\beta & f_j (x) -  r^\beta \intav_{B(z, r)} |f(y)| \dy  \Big| \\
& \lesssim \frac{r_j^{\beta }}{r_j^d} \int_{\R^d} |f_j - f| +  \Big|\int_{\R^d} |f(y)| \big(\frac{r_j^\beta}{r_j^{d}} \chi_{B(z_j,r_j)} (y) - \frac{r^\beta}{r^{d}} \chi_{B(z,r)} (y) \big) \dy \Big| \to 0
\end{align*}
as $j \to \infty$. The first term goes to $0$ as $r_j \to r >0$ and $\| f_j - f \|_{L^1(\R^d)} \to 0$ as $j \to \infty$. The convergence of the second term may be seen by the dominated convergence theorem, as $f \in L^1$, $(z_j, r_j) \to (z,r)$ as $j \to \infty$ and $r_j, r >C$ for some constant $C$ and $j$ large enough. As $\| f_j - f \|_{L^p(\R^d)} \to 0$ for $1 \leq p \leq \frac{d}{d-1}$, then $\| M_\beta f_j - M_\beta f \|_{L^{r}(\R^d)} \to 0$ as $j \to \infty$ for some $r>\frac{d}{d-\beta}$ and therefore there is a subsequence $\{M_{\beta} f_{j_k}\}_{k \in \N}$ converging to $M_\beta f$ almost everywhere as $k \to \infty$, so $B(z,r) \in \CB_x^\beta (f)$.

We conclude the proof observing that, by contradiction, the case $r=0$ does not happen for $x \in E$. To see this, define the set $A_{j}=\{y\in E: M(f_j-f)(y)>1\}$. If $|\{j \in \N :  x\notin A_j\}|=\infty$ then going through a subsequence, if necessary,
$$
M_{\beta}f_{j_k}(x)\leq r^{\beta}_{j_k}M(f_{j_k}-f)+\frac{r^{\beta}_{j_k}}{r_{j_k}^{d}}\int_{B_{z,r_{j_k}}}|f| \to 0+0\times |f(x)|=0
$$
by the Lebesgue differentiation theorem, which is a contradiction. Otherwise, if $|\{j \in \N : x\notin A_j\}| < \infty$ then 
$$
x\in A:=\bigcup_{j_0\geq1}\bigcap_{j\geq j_0}A_{j},
$$
which is a measure zero set as a consequence of the weak (1,1) inequality for the maximal operator $M$ and the hypothesis $\| f_j - f \|_{L^1(\R^d)} \to 0$.
\end{proof}

In the case of $M_\beta^c$, the family of good balls $\CB_x^\beta$ is just determined by the family of good radii $\CR_x^\beta$. Of course, Lemma \ref{lemma:limit of good balls} continues to hold in this case, where $z=x$ and $z_j=x$ for all $j \in \N$.




\subsection{The derivative of $M_\beta$}

In order to understand the weak derivative $\nabla M_\beta f$, it is useful to recall the concept of approximate derivative. A function $f: \mathbb{R} \to \mathbb{R}$ is said to be {\it approximately differentiable} at a point $x_0\in \mathbb{R}$ if there exists a real number $\alpha$ such that, for any $\varepsilon >0$, the set
$$A_{\varepsilon} = \left\{ x \in \mathbb{R} : \ \frac{|f(x) - f(x_0) - \alpha(x- x_0)|}{|x-x_0|} < \varepsilon \right\}$$
has $x_0$ as a density point. In this case, the number $\alpha$ is called the \textit{approximate derivative} of $f$ at $x_0$ and it is uniquely determined. It follows directly from the definition that if $f$ is differentiable at $x_0$ then it is approximately differentiable at $x_0$, and the classical and approximate derivatives coincide. In the absence of differentiability, if the weak derivative of $f$ exists it also coincides with the approximate derivative \cite[Theorem 6.4]{EvansGariepy}.

Haj\l asz and Maly \cite{HM2010} showed that $M^c_0 f$ is approximate differentiable, and their arguments easily adapt to the non-centered maximal operator and to the fractional  setting. Moreover, the boundedness
$$
\| \nabla M_\beta f \|_q \leq C \| \nabla f \|_1
$$
for $1 \leq \beta < d$ \cite{KS2003} and $\beta \in (0,1)$ if $f$ is radial \cite{LM2017} implies that $M_\beta f$ is weakly differentiable in those cases and therefore its weak derivative equals to its approximate derivative, leading to the following lemma.
\begin{lemma}[Derivative of the maximal function \cite{LM2017}]\label{lemma:derivative Mbeta}
Let $f \in W^{1,1}(\R^d)$ and  $x\in\R^{d}$. Then, for all $B=B(z,r) \in \CB_x^\beta$, we have that
\begin{itemize}
\item[(i)] If $1 \leq \beta < d$, then $ M_{\beta}f$ is weakly differentiable and for almost every $x \in \R^d$ its weak derivative $\nabla  M_{\beta}f$ satisfies 
$$
\nabla  M_\beta f(x) = r^\beta \intav_{B} \nabla |f|(y) \dy
$$
and the same holds for $M_\beta^c f$.
\item[(ii)] If $\beta\in(0,1)$ and $f$ is a radial function, then  $ M_{\beta}f$ is differentiable a.e., and for almost every $x \in \R^d$ its derivative $\nabla  M_{\beta}f$ satisfies 
$$
\nabla  M_\beta f(x) = r^\beta \intav_{B} \nabla |f|(y) \dy.
$$
\end{itemize}
We call this identity Luiro's formula.
\end{lemma}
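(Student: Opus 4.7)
The plan is to combine the approximate differentiability of $M_\beta f$ provided by the Haj\l asz--Mal\'y scheme with the a priori Sobolev boundedness statements cited in the lemma. The boundedness $\|\nabla M_\beta f\|_q\leq C\|\nabla f\|_1$---from \cite{KS2003} in case (i) and from \cite{LM2017} in case (ii)---furnishes a genuine weak gradient of $M_\beta f$ in $L^q(\R^d)$. Once $M_\beta f$ is shown to be approximately differentiable at a.e.~$x$ with approximate gradient equal to $r^\beta\intav_B\nabla|f|$ for an arbitrary good ball $B=B(z,r)\in\CB_x^\beta$, \cite[Theorem~6.4]{EvansGariepy} forces this formula to hold for the weak gradient as well.

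The identification of the approximate gradient would rest on a translation-competition argument. Fix $x$ and a good ball $B=B(z,r)\in\CB_x^\beta$ with $r>0$, which is the generic situation in view of the discussion following the definition of $\CR_x^\beta$. For every $h\in\R^d$, the translated ball $B(z+h,r)$ still contains $x+h$ in its closure and therefore competes in the supremum defining $M_\beta f(x+h)$; a change of variables gives
$$
M_\beta f(x+h)-M_\beta f(x)\geq r^\beta \intav_{B}\bigl(|f|(y+h)-|f|(y)\bigr)\dy.
$$
The absolute continuity of $|f|\in W^{1,1}(\R^d)$ along almost every line, combined with Fubini and a Lebesgue-point argument for $\nabla|f|$, shows that for a.e.~$x$ the right-hand side equals $h\cdot r^\beta\intav_B\nabla|f| + o(|h|)$ as $h\to 0$. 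Thus the approximate directional derivative of $M_\beta f$ at $x$ is bounded below by $r^\beta \intav_B\nabla|f|\cdot h$ in every direction and for every $B\in\CB_x^\beta$; reversing $h\mapsto -h$ shows that if an approximate gradient at $x$ exists it must coincide with $r^\beta\intav_B\nabla|f|$, which in particular forces this vector to be independent of the choice of $B\in\CB_x^\beta$.

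The remaining step is to secure the existence of an approximate gradient of $M_\beta f$ almost everywhere. This is the content of the Haj\l asz--Mal\'y analysis in \cite{HM2010} for $\beta=0$ and $M_0^c$, and of its fractional and non-centered adaptations in \cite{LM2017}. The scheme pairs the lower bound above with a matching upper bound obtained by choosing a good ball $B_h=B(z_h,r_h)\in\CB_{x+h}^\beta$ at $x+h$ and translating it back to $x$, yielding
$$
M_\beta f(x+h)-M_\beta f(x)\leq r_h^\beta \intav_{B_h}\bigl(|f|(y)-|f|(y-h)\bigr)\dy.
$$
The main obstacle is that the pair $(z_h,r_h)$ need not converge as $h\to 0$, so that the upper bound involves a potentially different ball at each scale. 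This is where Lemma~\ref{lemma:limit of good balls} is crucial: any accumulation point of $(z_h,r_h)$ lies in $\CB_x^\beta$, and by the uniqueness step of the previous paragraph such an accumulation point produces the same linear part $r^\beta \intav_B \nabla|f|\cdot h$; a density argument then converts this into genuine approximate differentiability. Case (ii) enjoys additional rigidity from radial symmetry, which upgrades approximate differentiability to classical a.e.\ differentiability as recorded in \cite{LM2017}.
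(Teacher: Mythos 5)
Your proposal follows essentially the same route as the paper, which itself only sketches this lemma: approximate differentiability via the Haj\l asz--Mal\'y/Luiro translation-competition argument, a genuine weak gradient supplied by the a priori bounds of \cite{KS2003} (case (i)) and \cite{LM2017} (case (ii)), identification of the two via \cite[Theorem 6.4]{EvansGariepy}, and the upgrade to classical a.e.\ differentiability in the radial case from the one-dimensional absolute continuity result of \cite{CM2015}. The one small caveat is that Lemma~\ref{lemma:limit of good balls} as stated concerns good balls of a sequence $f_j\to f$ at a \emph{fixed} point $x$, whereas your upper bound needs the analogous (and analogously proved) compactness of good balls of a fixed $f$ at nearby points $x+h$ as $h\to 0$.
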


The value of the approximate derivative of $M_\beta f$ is a simple computation which can be obtained arguing as in \cite{HM2010} or \cite{LM2017}, and has its roots in the work of Luiro \cite{Luiro2007}. The stronger statement in (ii) regarding the a.e. differentiability of $M_\beta f$ in the radial case is a consequence of the one-dimensional result of Carneiro and the second author \cite{CM2015}, who showed that for $d=1$, the maximal function $M_\beta f$ is absolutely continuous and therefore differentiable almost everywhere in the classical sense; this extends to higher dimensions when acting on radial functions.




\subsection{A Brézis--Lieb type reduction}\label{ae+BrezisLieb}

In order to prove both Theorem \ref{thm:beta bigger than 1} and \ref{thm:beta smaller than 1}, we will show that for any $f \in W^{1,1}(\R^d)$ and $\{f_j\}_{j \in \N}$ sequence of functions in $W^{1,1}(\R^d)$ such that $\| f_j - f\|_{W^{1,1}(\R^d)} \to 0$ as $j \to \infty$, then
\begin{equation}\label{eq:goal}
\| \nabla M_\beta f_j - \nabla M_\beta f \|_{L^{d/(d-\beta)}(\R^d)} \to 0 \quad \textrm{as $j \to \infty$}.
\end{equation}
The classical Brézis--Lieb lemma \cite{BL1976} reduces the proof of \eqref{eq:goal} to showing that
$$
\int_{\R^d} |\nabla M_\beta f_j|^{\frac{d}{d-\beta}} \to \int_{\R^d} |\nabla M_\beta f|^{\frac{d}{d-\beta}} \quad \textrm{as $j \to \infty$}
$$
provided the almost everywhere convergence
\begin{equation}\label{eq:a.e. derivatives}
\nabla M_\beta f_j (x) \to \nabla M_\beta f(x) \quad \textrm{a.e. \:\: as $j \to \infty$}
\end{equation}
holds.

The rest of this section is devoted to show \eqref{eq:a.e. derivatives}, which is the content of the forthcoming Lemma \ref{lemma:ae convergence derivatives}.




\subsection{Almost everywhere convergence of the derivatives}

In order to show \eqref{eq:a.e. derivatives} we extend to higher dimensions and to the fractional case the strategy of Carneiro, Pierce and the second author \cite{CMP2017}. Their arguments do not straightforward generalise to higher dimensions due to the lack of uniform convergence of $M_\beta f_j$ to $M_\beta f$ (which holds for $d=1$ and $W^{1,1}(\R)$-functions).

In view of the representation of the derivative of $M_\beta$ in Lemma \ref{lemma:derivative Mbeta}, it is useful to note that convergence of $f_j$ to $f$ in $W^{1,1}$ implies convergence of their modulus. A proof of this functional analytic result is provided below for completeness as we could not find it in the literature. This fact was implicitly used in the work of Luiro \cite{Luiro2007}, to whom we are grateful for a helpful conversation regarding a step in the proof. It is noted that the one-dimensional version of this result has a slightly simpler proof based on the fundamental theorem of calculus; see \cite[Lemma 14]{CMP2017}.

\begin{lemma}\label{lemma:convergence modulus in W11}
Let $f \in W^{1,1}(\R^d)$ and $\{f_j\}_{j \in \N} \subset W^{1,1}(\R^d)$ be such that $\| f_j - f \|_{W^{1,1}(\R^d)} \to 0$ as $j \to \infty$. Then $\| |f_j| - |f| \|_{W^{1,1}(\R^d)} \to 0$ as $j \to \infty$.
\end{lemma}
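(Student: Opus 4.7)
The plan is to invoke the Stampacchia chain rule, which ensures that for any $g \in W^{1,1}(\R^d)$ the function $|g|$ lies in $W^{1,1}(\R^d)$ with weak gradient
\[
\nabla |g|(x) = \sgn(g(x)) \, \nabla g(x) \quad \textrm{for a.e.\ } x \in \R^d,
\]
together with the vanishing identity $\nabla g = 0$ a.e.\ on $\{g=0\}$. The $L^1$ convergence $\||f_j| - |f|\|_{L^1(\R^d)} \to 0$ is immediate from the reverse triangle inequality $\bigl||f_j|-|f|\bigr| \leq |f_j - f|$, so it suffices to prove that $\|\nabla |f_j| - \nabla |f|\|_{L^1(\R^d)} \to 0$.

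Setting $\sigma_j := \sgn(f_j)$ and $\sigma := \sgn(f)$, the chain rule yields the decomposition
\[
\nabla |f_j| - \nabla |f| = \sigma_j (\nabla f_j - \nabla f) + (\sigma_j - \sigma) \, \nabla f.
\]
Since $|\sigma_j| \leq 1$, the first term is controlled in $L^1$ by $\|\nabla f_j - \nabla f\|_{L^1} \to 0$. For the second term, the Stampacchia identity forces the integrand to vanish a.e.\ on $\{f=0\}$, while on $\{f \neq 0\}$ it is pointwise dominated by $2|\nabla f| \in L^1(\R^d)$, providing an integrable envelope for a dominated convergence argument.

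The main obstacle is that convergence of the sign functions $\sigma_j$ to $\sigma$ is not automatic: from $\|f_j - f\|_{L^1} \to 0$ one only obtains a.e.\ convergence $f_{j_k} \to f$, and therefore $\sigma_{j_k} \to \sigma$ a.e.\ on $\{f \neq 0\}$, along a subsequence. Applying dominated convergence to $(\sigma_{j_k} - \sigma)\nabla f$ along that subsequence gives $\|(\sigma_{j_k} - \sigma)\nabla f\|_{L^1} \to 0$. To upgrade this to convergence of the full sequence, I would invoke the standard subsequence principle: every subsequence of $\{f_j\}_{j\in\N}$ still converges to $f$ in $W^{1,1}(\R^d)$ and hence admits a further subsequence along which the preceding argument yields $\||f_{j_{k_\ell}}| - |f|\|_{W^{1,1}(\R^d)} \to 0$. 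Since the candidate limit $|f|$ is independent of the chosen subsequence, the entire sequence $\{|f_j|\}_{j \in \N}$ converges to $|f|$ in $W^{1,1}(\R^d)$, completing the proof.
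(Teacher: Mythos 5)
Your proof is correct. It reaches the same goal as the paper's but by a cleaner route: you invoke the chain rule $\nabla|g|=\sgn(g)\nabla g$ (with $\nabla g=0$ a.e.\ on $\{g=0\}$) to get the two-term decomposition $\nabla|f_j|-\nabla|f|=\sigma_j(\nabla f_j-\nabla f)+(\sigma_j-\sigma)\nabla f$, whereas the paper avoids quoting the chain rule and instead partitions $\R^d$ into the nine intersections of $\{f_j>0\}$, $\{f_j<0\}$, $\{f_j=0\}$ with the corresponding sets for $f$, handling each piece by hand. The underlying mechanism is identical in both arguments: where the signs of $f_j$ and $f$ agree (or where one of them vanishes, so that its gradient vanishes a.e.), the difference of gradients of the moduli is controlled by $\|\nabla f_j-\nabla f\|_{L^1}$; where the signs disagree, the extra contribution is dominated by $2|\nabla f|$ on a set that becomes negligible. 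You dispose of that last piece via a.e.\ convergence of the signs along subsequences, dominated convergence, and the sub-subsequence principle; the paper instead proves directly that $|X_j\cap Y|\to 0$ using convergence in measure and then runs a contradiction-plus-dominated-convergence argument. Your packaging is shorter and arguably more transparent, at the cost of citing the Sobolev chain rule as a black box; the paper's version is more elementary and self-contained, which is presumably why the authors chose it, having remarked that they could not locate the statement in the literature.
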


\begin{proof}
Of course  $\| |f_j|- |f|\|_{L^1(\R^d)}  \leq \|f_j-f\|_{L^1(\R^d)} \to 0$ follows from the triangle inequality. 
To see that $\| \nabla |f_j| - \nabla |f| \|_{L^1(\R^d)} \to 0$, define the sets $X_j:= \{ x \in \R^d : f_j(x) >0 \}$, $Y_j:= \{ x \in \R^d : f_j(x) < 0 \}$ and $Z_j:= \{ x \in \R^d : f_j(x) = 0 \}$ for all $j \in \N$, and let $X, Y$ and $Z$ be defined similarly with respect to $f$. It then suffices to show the convergence on each of the nine subsets obtained by intersecting $X_j, Y_j, Z_j$ with $X,Y,Z$. Note that on $X_j \cap X$, $Y_j \cap Y$ and $Z_j \cap Z$, one has $|\nabla |f_j| - \nabla |f|| =|\nabla f_j - \nabla f|$ and therefore the convergence on those sets follows from the hypothesis $\| \nabla f_j - \nabla f \|_{L^1(\R^d)} \to 0$.

On $X_j \cap Z$ and $Y_j \cap Z$, one should note that $\nabla f = \nabla |f| = 0$ except for a set of measure zero. Indeed, if $I \subset Z$ has positive measure, one has $f(x)=|f(x)|=0$ on $I$ and therefore $\nabla f = \nabla |f| = 0$. Then $|\nabla |f_j| - \nabla |f|| =|\nabla f_j - \nabla f|$ a.e. on $X_j \cap Z$ and $Y_j \cap Z$ and the convergence on such sets follows again simply by the hypothesis $\| \nabla f_j - \nabla f \|_{L^1(\R^d)} \to 0$. The terms corresponding to $ Z_j \cap X$ and $Z_j \cap Y$ follow in a similar manner.

On $X_j \cap Y$,
$$
\int_{X_j \cap Y} |\nabla |f_j| - \nabla |f| | = \int_{X_j \cap Y} |\nabla f_j + \nabla f | \leq  \int_{\R^d} |\nabla f_j - \nabla f |  + \int_{X_j \cap Y} 2 | \nabla f |.
$$
The first term goes to $0$ as $j \to \infty$, as by hypothesis $\| \nabla f_j - \nabla f \|_{L^1(\R^d)} \to 0$. 

To show that second term goes to $0$, it suffices to see that $|X_j \cap Y| \to 0$ as $j \to \infty$. Indeed, assume that this assumption holds and, for a contradiction, that there exists a subsequence $j_{k}$ and $c>0$ such that
$$
\lim_{k \to \infty} \int_{X_{j_k} \cap Y} 2 |\nabla f| \geq c.
$$
As it is assumed that $|X_{j_k} \cap Y| \to 0$, there exists a further subsequence $j_{k_{\ell}}$ for which $\chi_{X_{j_{k_\ell}} \cap Y} \to 0$ a.e., and thus the dominated convergence theorem yields
$$
\lim_{\ell \to \infty} \int_{X_{j_{k_\ell}}} 2 |\nabla f| = 0,
$$
a contradiction. Finally, to show that $|X_j \cap Y| \to 0$, for any given $\varepsilon>0$, let $\delta>0$ be such that
$$
|A_\delta|:=|\{ x \in \R^d : 0 < f(x) \leq \delta\}| \leq \varepsilon/2.
$$
The set $\{x \in \R^d: f_j(x) < 0 \:\: \text{and} \:\: f(x) < \delta \}$ is contained in $\{x \in \R^d: |f(x)-f_j(x)| > \delta\}$, and the measure of the latter converges to 0 as $j \to \infty$ by hypothesis (convergence in $L^1$ implies convergence in measure). Thus, there exists $j_0 \in \N$ large enough so that
$$
|B_\delta|:=|\{ x \in \R^d: f_j(x) < 0 \:\: \text{and} \:\: f(x) < \delta  \}| \leq \varepsilon/2
$$
for all $j \geq j_0$. As $X_j \cap Y := A_\delta \cup B_\delta$, the result follows from combining the two previous displays. The term corresponding to $Y_j \cap X$ follows analogously, and the proof is then concluded.
\end{proof}

We now have all the necessary ingredients to prove \eqref{eq:a.e. derivatives}. The proof is a minor variant of its one-dimensional counterpart in \cite[Lemma 15]{CMP2017}; full details are given below for completeness.

\begin{lemma}\label{lemma:ae convergence derivatives}
Let $f \in W^{1,1}(\R^d)$ and $\{f_j\}_{j \in \N} \subset W^{1,1}(\R^d)$ be such that $\| f_j - f \|_{W^{1,1}(\R^d)} \to 0$ as $j \to \infty$. Then
\begin{equation}\label{eq:a.e. conv derivatives}
\nabla M_\beta f_j (x) \to \nabla M_\beta f(x) \quad \text{a.e.} \quad as \:\:j \to \infty
\end{equation}
if Luiro's formula holds for $M_\beta$, and the same holds for $M_\beta^c$.
\end{lemma}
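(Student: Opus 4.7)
The plan is to argue by contradiction, following the one-dimensional strategy of \cite[Lemma 15]{CMP2017} while using Lemmas \ref{lemma:limit of good balls}, \ref{lemma:derivative Mbeta} and \ref{lemma:convergence modulus in W11} to compensate for the lack of uniform convergence of $M_\beta f_j$ to $M_\beta f$ in higher dimensions. Fix a point $x$ belonging to the full-measure set where Luiro's formula holds for $f$ and for every $f_j$, $x$ is a Lebesgue point of $f$, $M_\beta f(x) > 0$, and $0 \notin \CR_x^\beta$. Suppose that $\nabla M_\beta f_j(x) \not\to \nabla M_\beta f(x)$; then there exist $\varepsilon > 0$ and a subsequence, which I still denote $\{f_j\}$, with $|\nabla M_\beta f_j(x) - \nabla M_\beta f(x)| \geq \varepsilon$ for every $j$. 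For each $j$ I select $B_j = B(z_j, r_j) \in \CB_{x,j}^\beta$ and invoke Luiro's formula to write $\nabla M_\beta f_j(x) = r_j^\beta \intav_{B_j} \nabla |f_j|(y) \dy$.

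The first step is to pass to a further subsequence along which $(z_j, r_j) \to (z, r)$ with $r > 0$; since $|z_j - x| \leq r_j$ it suffices to control the radii. For the upper bound, after first passing to a subsequence yielding $M_\beta f_j(x) \to M_\beta f(x) > 0$ (possible by the $L^r$ convergence argument used in the proof of Lemma \ref{lemma:limit of good balls}), the trivial estimate
\[
M_\beta f_j(x) = r_j^\beta \intav_{B_j} |f_j| \leq \omega_d^{-1} r_j^{\beta - d} \|f_j\|_{L^1(\R^d)}
\]
together with $\beta < d$ and the boundedness of $\|f_j\|_{L^1}$ rules out $r_j \to \infty$. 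Ruling out $r_j \to 0$ is achieved by the same maximal inequality dichotomy used at the end of Lemma \ref{lemma:limit of good balls}, based on the sets $A_j = \{ M(f_j - f) > 1 \}$, the weak-type $(1,1)$ inequality for $M$, and the Lebesgue differentiation theorem. Once the radii are controlled, Lemma \ref{lemma:limit of good balls} guarantees that the limit ball $B(z,r)$ belongs to $\CB_x^\beta$, and Luiro's formula for $f$ gives $\nabla M_\beta f(x) = r^\beta \intav_{B(z,r)} \nabla|f|(y)\dy$.

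The contradiction will then follow from the convergence
\[
r_j^\beta \intav_{B_j} \nabla|f_j|(y) \dy \longrightarrow r^\beta \intav_{B(z,r)} \nabla|f|(y) \dy,
\]
which I would establish via the telescoping decomposition into $r_j^\beta \intav_{B_j}(\nabla|f_j| - \nabla|f|)$ and $r_j^\beta \intav_{B_j} \nabla|f| - r^\beta \intav_{B(z,r)} \nabla|f|$. The first piece is bounded by $\omega_d^{-1} r_j^{\beta - d} \|\nabla|f_j| - \nabla|f|\|_{L^1(\R^d)}$ and tends to zero by Lemma \ref{lemma:convergence modulus in W11} combined with $r_j$ being bounded away from $0$. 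The second piece is a single integral of $\nabla|f| \in L^1(\R^d)$ against the weight $\tfrac{r_j^\beta}{|B_j|}\chi_{B_j} - \tfrac{r^\beta}{|B(z,r)|}\chi_{B(z,r)}$, which is uniformly bounded and converges to zero pointwise almost everywhere, so dominated convergence finishes the argument. The proof for $M_\beta^c$ is identical with $z_j = z = x$ throughout. The main obstacle is the lower radius control: it is genuinely distinct from the upper bound and cannot be read off Lemma \ref{lemma:limit of good balls} for free, since one needs to rule out $r_j \to 0$ for the actual good balls of the perturbations $f_j$, not just for arbitrary sequences of balls with vanishing radii.
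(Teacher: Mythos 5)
Your proof is correct and follows essentially the same route as the paper's: control of the good radii via Lemma \ref{lemma:limit of good balls}, Luiro's formula from Lemma \ref{lemma:derivative Mbeta}, Lemma \ref{lemma:convergence modulus in W11}, and the same telescoping/dominated-convergence estimate, with the only cosmetic difference that you phrase the subsequence argument as a contradiction rather than as boundedness of $\{\nabla M_\beta f_j(x)\}_j$ plus uniqueness of its accumulation points. One small remark: your closing caveat is unnecessary, since Lemma \ref{lemma:limit of good balls} is stated precisely for good balls of the perturbations $f_j$ and its proof already excludes the accumulation point $r=0$, so the lower radius control can in fact be read off from it.
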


\begin{proof}
Set $f_0=f$, and for every $j \geq 0$ let $E_j$ be the set of measure zero for which Lemma \ref{lemma:derivative Mbeta} fails for $f_j$. The set $E:=\cup_{j\geq 0} E_j$ continues to have measure zero. Let $F$ be the sets of measure zero for which Lemma \ref{lemma:limit of good balls} fails. That is, if $x\in F$ and $\{(z_j, r_j)\}_{j \in \N}$ is a sequence where $B(z_j, r_j) \in \mathcal{B}_x^\beta$, an accumulation point $(z,r)$ of $\{(z_j, r_j)\}_{j \in \N}$ does not necessarily satisfy $B(z,r) \in \mathcal{B}_x^\beta$. It then suffices to prove the desired result for $x \in D:=\R^d \backslash (E \cup F)$.

Given $x \in D$, there exist $\delta=\delta(x)>0$ and $N=N(x)<\infty$ such that $\CR_{x}^\beta \subset [\delta, N]$. We claim that there exists $j_0=j_0(x)$ such that $\CR_{x,j}^\beta \subset (\delta/2, 2N)$ for $j \geq j_0$. Otherwise, we may find a sequence $\{r_{j_k}\}_{k \geq 1} \subset [0, \delta/2] \cup [2N,\infty)$. If there exists a constant $C<\infty$ such that $\{r_{j_k}\}_{k \in \N} \subset [0, \delta/2] \cup [2N,C]$, the sequence $\{r_{j_k}\}_{k \in \N}$ admits a convergent subsequence $\{r_{j_{k_\ell}}\}_{\ell \in \N}$. By Lemma \ref{lemma:limit of good balls}, $\lim_{\ell \to \infty} r_{j_{k_\ell}} \in \CR_{x}^\beta$ but by construction this limit lies in $[0, \delta/2] \cup [2N,C]$, which is a contradiction. If one cannot find such a $C< \infty$,  there exists a subsequence $\{r_{j_{k_\ell}}\}_{\ell \in \N}$ such that $\lim_{\ell \to \infty} r_{j_{k_\ell}} = \infty$, which is again a contradiction by Lemma \ref{lemma:limit of good balls}.

Let $r_j \in \CR_{x,j}^\beta$ for $j \geq j_0$ and $z_j$ such that $B_j=B(z_j,r_j) \in \CB_{x,j}^\beta$. Using the above lower bound on $r_j$ and Lemma \ref{lemma:derivative Mbeta} one has
$$
|\nabla M_\beta f_j (x)| \lesssim  r_j^{\beta-d}  \int_{B_j} |\nabla| f_j|| \leq \delta^{\beta-d} ( \| \nabla |f_j| - \nabla |f| \|_{L^1(\R^d)} + \| \nabla |f| \|_{L^1(\R^d)}) \leq C
$$
for $j \geq \max\{j_0, j_1\}$, where $j_1$ is such that $\| \nabla |f_j| - \nabla |f| \|_{L^1(\R^d)} < \varepsilon$ for some $\varepsilon$. Then $\{\nabla M_\beta f_j (x)\}_{j \in \N}$ is a bounded sequence. Consider any convergent subsequence $\{\nabla M_{\beta} f_{j_k}(x)\}_{k \in \N}$. As the sequence $\{r_{j_k}\}_{k \in \N}$ is bounded, passing to a further subsequence one may assume that $(z_{j_{k_\ell}}, r_{j_{k_\ell}}) \to (z,r)$ as $\ell \to \infty$, where $B(z,r) \in \CB_{x}^\beta$ by Lemma \ref{lemma:limit of good balls}. By Lemma \ref{lemma:derivative Mbeta}
$$
\nabla M_\beta f_{j_{k_\ell}} (x) = r_{j_{k_\ell}}^\beta \intav_{B_{j_{k_\ell}}}  \nabla |f_{j_{k_\ell}}| \qquad \textrm{and} \qquad \nabla M_\beta f (x) = r^\beta \intav_{B(z,r)}  \nabla |f|.
$$
Then $\nabla M_\beta f_{j_{k_\ell}} (x) \to \nabla M_\beta f (x)$ as $\ell \to \infty$, as
\begin{align*}
\Big| & r_{j_{k_\ell}}^\beta  \intav_{B_{j_{k_\ell}}}  \nabla |f_{j_{k_\ell}}| -  r^\beta \intav_{B(z,r)}  \nabla |f| \Big| \\ & \:\: \lesssim  \frac{r_{j_{k_\ell}}^\beta}{r_{j_{k_\ell}}^d} \int_{\R^d} \big| \nabla | f_{j_{k_\ell}}| -\nabla  | f| \big|  
  + \int_{\R^d} |\nabla | f| | \Big( \frac{r_{j_{k_\ell}}^\beta}{r_{j_{k_\ell}}^d} \chi_{B_{j_{k_\ell}}}(y) -  \frac{r^\beta}{r^d} \chi_{B(z,r)}(y) \Big) \dy   \to 0
\end{align*}
as $\ell \to \infty$; the first term goes to $0$ by Lemma \ref{lemma:convergence modulus in W11} whilst the second term can be seen to go to $0$ by the dominated convergence theorem, as $f \in W^{1,1}$ and the radii $r_{j_{k_{\ell}}}$ are bounded below. Then, the original convergent subsequence $\{\nabla M_{\beta} f_{j_k}(x)\}_{k \in \N}$ converges to $\nabla M_\beta f(x)$ as $k \to \infty$. As this holds for any convergent subsequence $\{ \nabla M_\beta f_{j_k}(x)\}_{k \in \N}$ of $\{ \nabla M_\beta f_{j}(x)\}_{j \in \N}$, one has that $\nabla M_\beta f(x)$ is the unique accumulation point of $\{\nabla M_\beta f_j(x) \}_{j \in \N}$, and thus the result follows because such a sequence is bounded.
\end{proof}

\begin{remark}\label{remark:ae convergence Mbeta}
Note that the above proof also shows that, in particular, for any  $0<\beta < d$,
\begin{equation}\label{eq:a.e. convergence Mbeta}
M_\beta f_j(x) \to M_\beta f(x)
\end{equation}
a.e. on $\R^d$ as $j \to \infty$, provided $\| f_j - f \|_{W^{1,1}} \to 0$. Note that for $d=1$, or $d>1$ and $\beta \in (d-1,d)$ this is slightly easier due to the $L^\infty$ boundedness of $M_\beta$ for $f \in W^{1,1}(\R^d)$. The same holds for $M_\beta^c$.
\end{remark}




\subsection{A classical convergence result}

Finally, the following classical variant of the dominated convergence theorem will be used several times throughout the paper.

\begin{theorem}[Generalised Dominated Convergence Theorem]\label{thm:gdct}
Let $1 \leq p < \infty$ $f, g \in L^p(\R^d)$  and $\{f_j\}_{j \in \N}$ and $\{g_j\}_{j \in \N}$ be sequences of functions on $L^p(\R^d)$ such that
\begin{enumerate}
    \item[\textit{(i)}] $|f_j(x)| \leq |g_j(x)|$  a.e.,
    \item[(ii)] $f_j(x) \to f(x)$ and $g_j(x) \to g(x)$ a.e. as $j \to \infty$,
    \item[(iii)] $\| g_j - g \|_{L^p(\R^d)} \to 0$.
\end{enumerate}
Then $\| f_j - f \|_{L^p(\R^d)} \to 0$.
\end{theorem}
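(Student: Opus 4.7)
The plan is to run the standard Fatou trick with $2^{p}(|g_j|^p+|g|^p)$ as the absorbing majorant, exploiting that $L^p$ convergence of $g_j$ to $g$ gives convergence of the $L^p$ norms.

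First I would record the pointwise consequences of the hypotheses. From (i) and (ii), passing to the limit yields $|f(x)|\le |g(x)|$ a.e., and hence for each $j$
\[
|f_j(x)-f(x)|^p \;\le\; \bigl(|f_j(x)|+|f(x)|\bigr)^p \;\le\; 2^p\bigl(|g_j(x)|^p+|g(x)|^p\bigr) \quad \text{a.e.}
\]
Thus the functions
\[
h_j(x):=2^p\bigl(|g_j(x)|^p+|g(x)|^p\bigr)-|f_j(x)-f(x)|^p
\]
are nonnegative a.e. Moreover, by (ii) one has $h_j(x)\to 2^{p+1}|g(x)|^p$ a.e. as $j\to\infty$.

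Next I would apply Fatou's lemma to $\{h_j\}$:
\[
\int_{\R^d} 2^{p+1}|g|^p \;\le\; \liminf_{j\to\infty}\int_{\R^d} h_j
=\liminf_{j\to\infty}\Bigl(2^p\int_{\R^d}|g_j|^p+2^p\int_{\R^d}|g|^p-\int_{\R^d}|f_j-f|^p\Bigr).
\]
The hypothesis (iii) gives $\|g_j\|_{L^p}\to\|g\|_{L^p}$ by the triangle inequality, so $\int|g_j|^p\to\int|g|^p$ and therefore the right-hand side equals
\[
2^{p+1}\int_{\R^d}|g|^p - \limsup_{j\to\infty}\int_{\R^d}|f_j-f|^p.
\]
Since $g\in L^p(\R^d)$, the finite integral $\int |g|^p$ can be cancelled from both sides, yielding $\limsup_{j\to\infty}\int_{\R^d}|f_j-f|^p\le 0$, which is the desired conclusion.

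There is no serious obstacle here; the argument is the classical generalisation of the dominated convergence theorem. The only points requiring a brief verification are the a.e. bound $|f|\le |g|$ (immediate from taking a.e. limits in (i) and (ii)) and the fact that hypothesis (iii) upgrades to $\|g_j\|_{L^p}\to\|g\|_{L^p}$, which is needed to guarantee that $\int|g|^p$ is finite and may be subtracted from both sides of the Fatou inequality.
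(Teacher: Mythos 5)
Your proof is correct and is essentially the standard argument the paper merely cites (Royden, Ch.~4, Thm.~19): a Fatou-type argument applied to the nonnegative functions $2^p(|g_j|^p+|g|^p)-|f_j-f|^p$, using that hypothesis (iii) forces $\int|g_j|^p\to\int|g|^p$. The only verification steps you flag --- the a.e.\ bound $|f|\le|g|$ and the finiteness of $\int|g|^p$ needed to cancel it --- are handled correctly, so nothing further is required.
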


The proof of this theorem is standard and consists in two applications of Fatou's lemma; see for instance \cite[Chapter 4, Theorem 19]{royden2010real}.






\section{The case $1 \leq \beta < d$: Proof of Theorem \ref{thm:beta bigger than 1}}\label{sec:beta big}

This follows from a simple application of the Generalised Dominated Convergence Theorem together with the inequality \eqref{KS} and the a.e. convergences \eqref{eq:a.e. conv derivatives} and \eqref{eq:a.e. convergence Mbeta}.

Indeed, let  $f \in W^{1,1}(\R^d)$ and $\{f_j\}_{j \in \N} \subset W^{1,1}(\R^d)$ such that $\| f_j - f \|_{W^{1,1}(\R^d)} \to 0$ as $j \to \infty$.
Recall the inequality \eqref{KS} of Kinnunen and Saksman \cite{KS2003},
$$
|\nabla M_\beta f_j(x)| \leq M_{\beta-1}  f_j(x) \quad \textrm{for all $j > 0$},
$$
which holds for all $1 \leq \beta < d$ as $f_j \in L^r$ for $1 \leq r \leq \frac{d}{d-1}$. By Lemma \ref{lemma:ae convergence derivatives}, one has
\begin{equation*}
    \nabla M_{\beta} f_j \to \nabla M_{\beta} f \quad \textrm{a.e. \:\: as $j \to \infty$}.
\end{equation*}
By Remark \ref{remark:ae convergence Mbeta}
$$M_{\beta - 1} f_j \to M_{\beta -1}f \quad \textrm{a.e. \:\: as $j \to \infty$}$$
and, moreover, the sublinearity and boundedness of $M_{\beta-1}$ implies
$$
\|M_{\beta-1} f_j - M_{\beta-1} f \|_{L^{\frac{d}{d-\beta}}(\R^d)} \lesssim \| f_j - f \|_{L^{\frac{d}{d-1}}(\R^d)} \lesssim \| \nabla f_j - \nabla f \|_{L^1(\R^d)} \to 0 $$
as $j \to \infty$.

The hypothesis of Theorem \ref{thm:gdct} are then satisfied, yielding
$$
\| \nabla M_\beta f_j - \nabla M_\beta f \|_{L^{\frac{d}{d-\beta}}(\R^d)} \to 0 \quad \textrm{as $j \to \infty$},
$$
as desired.




\section{The case $0 < \beta < 1$ for radial functions: Proof of Theorem \ref{thm:beta smaller than 1}}\label{sec:beta small}

The proof strategy for Theorem \ref{thm:beta smaller than 1} consists in studying separately what happens inside and outside a large compact set $K$. The main difficulty relies in establishing convergence in $K$; the term corresponding to $K^c$  may be seen as an error term. This strategy was already used by the second author in the one dimensional case \cite{Madrid2017}. However, the techniques used therein to analyse $K$ and $K^c$ only continue to work in very specific situations, and we need to develop a new approach in higher dimensions to deal with the general situation.\footnote{As mentioned in the Introduction, the analysis on $K$ for $d=1$ in \cite{Madrid2017} only extends in a natural way to higher dimensions if $d-1 < \beta < d$; further details of this will be provided in the Appendix \ref{app: (d-1,d)}}

In order to overcome the higher dimensional obstacles, we make use of some fundamental observations that proved to be useful in establishing the bound 
\begin{equation}\label{eq:bound again}
\| \nabla M_\beta f \|_q \leq C(d,\beta) \| \nabla f\|_1
\end{equation}
for radial $f$ in \cite{LM2017}. We remark that in contrast to \cite{Madrid2017}, our analysis outside the compact set is rather general and continues to hold for general function, any dimension, the centered case and any $0 \leq \beta < d$ (including the classical Hardy--Littlewood maximal operator) provided the bound \eqref{eq:bound again} holds in each corresponding case. This will be appropriately discussed in Section \ref{subsec:outside compact}.




\subsection{Preliminaries}\label{subsec:preliminaries}


A trivial but important observation for the non-centered maximal function is that if $|\nabla M_\beta f(x) |\neq 0$ and $B \in \CB_x^\beta$, then $x \in \partial B_x$: as $B_x$ is an admissible ball for all $y \in B_x$, one would have $M_\beta f (x) \leq  M_\beta f(y)$ for all $y \in B_x$, so if $x$ lied in the interior of the ball, it would be a local minimum for $M_\beta f$ and therefore $\nabla M_\beta f(x)=0$. 

Arguing in a similar manner, if $f$ is a radial function, $|\nabla M_\beta f (x)| \neq 0$ and $B_x \in \CB_x^\beta$, the center of the ball $B$ must lie in the direction joining $x$ and the origin: otherwise, there is a point $y$ lying in the interior of $B_x$ with $|y|=|x|$ which by radiality satisfies $|\nabla M_\beta f (x)|=|\nabla M_\beta f(y)|$, and the previous argument would imply $|\nabla M_\beta f (y)| = 0$. Thus, if $B_x=B(z_x,r_x) \in B_x^\beta$, one has $z_x=c_x x$ for some constant $c_x$. However, by radiality and the argument just described, one must have $c_x \geq 0$, as otherwise $-x$ lies in the interior of $B_{x}$. Then we are left with two cases: either
\begin{equation}\label{type of balls}
B_{x} \subseteq B(0,|x|) \qquad \text{or} \qquad B_{x} \subset B(0,|x|)^c.
\end{equation}
The first case corresponds to $0 \leq c_x \leq 1$ and the second one to $c_x > 1$.

Next we shall recall two preliminary lemmas observed in \cite{LM2017} that will be useful to the proof of Theorem \ref{thm:beta smaller than 1}. The first one corresponds to a refinement of Kinnunen's pointwise estimate \eqref{eq:Kinnunen}. 
\begin{lemma}[Lemma 2.9 \cite{LM2017}]\label{lemma:ye}
Suppose that  $f\in W^{1,1}_{loc}(\R^d)$, $0 < \beta < d$ and $B_x \in \mathcal{B}^{\beta}_x$ for some $x\in\R^d \setminus\{0\}$ such that $B_x\subset B(0,|x|)$.  Then
\begin{equation*}
\bigg|\intav_{B_x}\nabla |f|(y)\,\dy\,\bigg|\,\leq\,\intav_{B_x}|\nabla f(y)|\frac{|y|}{|x|}\dy\,.
\end{equation*}
\end{lemma}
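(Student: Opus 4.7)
The plan is to extract two pieces of information from the good-ball optimality of $B_x$---a \emph{vector} identity from translation perturbations and a \emph{scalar} inequality from dilations---and combine them to refine the trivial bound $|\intav_{B_x}\nabla|f|(y)\dy|\leq\intav_{B_x}|\nabla f(y)|\dy$ with the weight $|y|/|x|$. First, I would record the geometric content of the hypothesis: since $B_x\subset B(0,|x|)$ are closed balls sharing $x\in\partial B_x$, they must be internally tangent at $x$, which forces $z_x=\alpha x$ for some $\alpha\in[0,1)$, $r_x=(1-\alpha)|x|$, and the outward unit normal to $\partial B_x$ at $x$ to equal $\nu=(x-z_x)/r_x=x/|x|$. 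The coincidence of $\nu$ with $x/|x|$ is the geometric ingredient that will ultimately produce the weight $|y|/|x|$.

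For every unit vector $v$, the family $B_t:=B(z_x+tv,\,r_t)$ with $r_t=|x-z_x-tv|$ keeps $x\in\partial B_t$, so $B_t$ competes in the supremum defining $M_\beta f(x)$. Hence $\phi_v(t):=r_t^\beta\intav_{B_t}|f|\dy$ is maximized at $t=0$. A standard approximation argument for $f\in W^{1,1}_{loc}$ (combined with dominated convergence) makes $\phi_v$ differentiable at $0$, and imposing $\phi_v'(0)=0$ for all $v$ (both $v$ and $-v$ being admissible) produces the vector identity $A:=\intav_{B_x}\nabla|f|(y)\dy=B\nu$, where
\begin{equation*}
B=\tfrac{1}{r_x}\Big(\beta\intav_{B_x}|f|\dy\,+\,\intav_{B_x}(y-z_x)\!\cdot\!\nabla|f|(y)\dy\Big).
\end{equation*}
In particular $A$ is parallel to $x$. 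Dotting the identity with $\nu=x/|x|$ and rearranging yields the scalar identity
\begin{equation*}
\intav_{B_x}(x-y)\!\cdot\!\nabla|f|(y)\dy\;=\;\beta\intav_{B_x}|f|\dy.
\end{equation*}
Separately, each dilate $\lambda B_x=B(\lambda z_x,\lambda r_x)$ with $\lambda\geq 1$ remains admissible at $x$, so the right-derivative at $\lambda=1$ of $\lambda^\beta\intav_{B_x}|f(\lambda y)|\dy$ is nonpositive, which gives
\begin{equation*}
\intav_{B_x}y\!\cdot\!\nabla|f|(y)\dy\;\leq\;-\beta\intav_{B_x}|f|\dy\;\leq\;0.
\end{equation*}

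To conclude, $|A|=|B|=|x|^{-1}\bigl|\intav_{B_x}x\!\cdot\!\nabla|f|(y)\dy\bigr|$; splitting $x=(x-y)+y$ and combining the two displayed relations shows $\intav_{B_x}x\!\cdot\!\nabla|f|(y)\dy=\beta\intav_{B_x}|f|+\intav_{B_x}y\!\cdot\!\nabla|f|\leq 0$, so $B\leq 0$ and
\begin{equation*}
|A|\;=\;\frac{1}{|x|}\Big(-\beta\intav_{B_x}|f|\dy\,+\,\bigl(-\intav_{B_x}y\!\cdot\!\nabla|f|(y)\dy\bigr)\Big)\;\leq\;\frac{1}{|x|}\intav_{B_x}|y|\,|\nabla f(y)|\dy,
\end{equation*}
using $-\beta\intav|f|\leq 0$ and that the nonnegative quantity $-\intav y\!\cdot\!\nabla|f|$ equals its absolute value, which by triangle/Cauchy--Schwarz and $|\nabla|f||\leq|\nabla f|$ a.e.\ is bounded by $\intav|y|\,|\nabla f(y)|\dy$. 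The crucial input, and main obstacle, is the identification $A\parallel\nu$: each $\phi_v$ only yields a scalar maximum principle, but the freedom to vary $v$ over $S^{d-1}$ upgrades $\phi_v'(0)=0$ into a full vector constraint, without which one cannot improve the trivial bound to include the weight $|y|/|x|$. The remaining technical burden---two-sided differentiability of $\phi_v$ at $t=0$ and of the dilation functional at $\lambda=1$---is handled routinely by mollification or by direct analysis of difference quotients at Lebesgue points of $\nabla|f|$.
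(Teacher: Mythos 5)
Your argument is correct, and it is worth noting that the paper itself does not prove this statement --- it imports it verbatim from \cite{LM2017} --- so the relevant comparison is with the proof there. The route in \cite{LM2017} obtains the directional information (that $\intav_{B_x}\nabla|f|$ points along $x/|x|$) from Luiro's formula $\nabla M_\beta f(x)=r_x^\beta\intav_{B_x}\nabla|f|$ together with the radiality of $M_\beta f$, and then controls the radial derivative by a one-sided difference quotient along the contraction $y\mapsto\frac{|x|-h}{|x|}y$, which is where the weight $|y|/|x|$ enters. You instead derive everything from first variations of the maximizing ball itself: translating the center while pinning $x$ to the boundary gives the Euler--Lagrange identity $\intav_{B_x}\nabla|f|=B\nu$ with $\nu=(x-z_x)/r_x$, which by the internal tangency of $B_x$ in $B(0,|x|)$ equals $x/|x|$; dilating about the origin gives the sign condition $\beta\intav_{B_x}|f|+\intav_{B_x}y\cdot\nabla|f|\le 0$; and combining the two recovers exactly the weighted bound. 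I have checked the computation of $\phi_v'(0)$, the resulting scalar identity $\intav_{B_x}(x-y)\cdot\nabla|f|=\beta\intav_{B_x}|f|$, and the final chain of inequalities; all are correct. What your approach buys is that it nowhere uses radiality of $f$, only the good-ball property and the inclusion $B_x\subset B(0,|x|)$, so it proves the lemma in the generality in which it is stated and avoids invoking Luiro's formula (hence the a.e.\ differentiability of $M_\beta f$) altogether.

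Two small points. First, the assertion that \emph{every} dilate $\lambda B_x=B(\lambda z_x,\lambda r_x)$ with $\lambda\ge 1$ contains $x$ is false in general (take $z_x=0.9\,x$ and $\lambda=10$); the correct statement is that $x\in\overline{B(\lambda z_x,\lambda r_x)}$ for $1\le\lambda\le |x|/|z_x|$, which is a nontrivial interval since $|z_x|=|x|-r_x<|x|$. Only the right-derivative at $\lambda=1$ is used, so this is harmless, but the claim should be localized. Second, the differentiability you defer is indeed routine and can be dispatched without mollification: writing $\phi_v(t)=r_t^\beta\intav_{B(0,1)}|f|(z_x+tv+r_tw)\,dw$, the candidate derivative is a continuous function of $t$ by continuity of translations and dilations on $L^1_{loc}$ applied to $\nabla|f|$, so $\phi_v$ (and likewise the dilation functional) is $C^1$ near the critical parameter and the first-variation arguments are fully rigorous.
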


The second one is a refinement of the Kinnunen--Saksman inequality \eqref{KS}, which in fact is an implicit consequence of their proof. It is noted that this refinement also works for the centered maximal function - this will be used in Section \ref{subsec:outside compact}

\begin{lemma}[\cite{KS2003}]\label{lemma:refinement KS}
Suppose that $f \in W^{1,1}_{loc}(\R^d)$, $0 < \beta < d$ and $B_x \in \mathcal{B}_x^\beta$ for some $x \in \R^d$, and let $r_x$ denote the radius of $B_x$. Then
$$
\Big| r_x^\beta \intav_{B_x} \nabla |f| (y) \dy \Big| \leq C(d,\beta) r_x^{\beta-1} \intav_{B_x} |f(y)| \dy.
$$
\end{lemma}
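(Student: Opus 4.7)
The plan is to follow the classical Kinnunen--Saksman approach, which combines the divergence theorem on $B_x$ with the maximality of $B_x$ in the definition of $M_\beta f(x)$. First, since $|f|\in W^{1,1}_{\loc}(\R^d)$, integration by parts gives
\[
\Big|\int_{B_x}\nabla|f|(y)\,\dy\Big|\,\leq\,\int_{\partial B_x}|f|(y)\,dS(y),
\]
where the boundary integral is interpreted as the surface integral of the trace of $|f|$ on $\partial B_x$. It therefore suffices to prove the boundary-to-interior bound
\[
\int_{\partial B_x}|f|\,dS\,\leq\,\frac{d-\beta}{r_x}\int_{B_x}|f|\,\dy,
\]
as multiplying through by $r_x^\beta/|B_x|$ then yields the claim with $C(d,\beta)=d-\beta$.

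To establish this bound I would exploit the maximality of $B_x=B(z,r_x)$ (with $z=x$ in the centered case). For any $h>0$, the enlarged ball $B(z,r_x+h)$ still contains $\bar B_x\ni x$, and is therefore admissible in the definition of $M_\beta f(x)$. Hence
\[
(r_x+h)^\beta\intav_{B(z,r_x+h)}|f|\,\leq\,M_\beta f(x)\,=\,r_x^\beta\intav_{B_x}|f|,
\]
which, writing $V(s):=\int_{B(z,s)}|f|\,\dy$ and simplifying, becomes $V(r_x+h)\leq(1+h/r_x)^{d-\beta}V(r_x)$. Subtracting $V(r_x)$, dividing by $h$, and expanding for small $h>0$ yields
\[
\frac{V(r_x+h)-V(r_x)}{h}\,\leq\,\frac{d-\beta}{r_x}V(r_x)+O(h).
\]
Passing to the limit $h\to 0^+$, the left-hand side converges to $\int_{\partial B_x}|f|\,dS$ (by the coarea formula combined with Lebesgue differentiation), yielding the desired inequality.

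The main obstacle is the justification of the identity $\lim_{h\to 0^+}h^{-1}(V(r_x+h)-V(r_x))=\int_{\partial B_x}|f|\,dS$ for the specific good radius $r_x$. This is immediate when $f\in C^1$ (where $V\in C^1$ and the identity is the fundamental theorem of calculus), but for general $f\in W^{1,1}_{\loc}$ the radius $r_x$ could a priori lie in the measure-zero exceptional set arising from Lebesgue differentiation. I would handle this via standard Sobolev trace theory, noting that the trace of $|f|$ on every sphere $\partial B(z,s)$ is a well-defined element of $L^1(\partial B(z,s))$ and that the absolute continuity of $V$ together with the trace-based form of the divergence theorem permits relating the $\limsup$ of the left-hand side to the trace surface integral on the right. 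Alternatively, one may first prove the inequality for smooth $f$ and extend to $W^{1,1}_{\loc}$ by mollification on the fixed ball $B_x$, using continuity of the maps $f\mapsto\int_{B_x}\nabla|f|$ and $f\mapsto\int_{B_x}|f|$ under $W^{1,1}_{\loc}$-convergence.
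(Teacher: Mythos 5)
Your argument is correct and is essentially the Kinnunen--Saksman argument that the paper relies on for this lemma (the paper gives no proof of its own, noting only that the estimate is implicit in \cite{KS2003}): integration by parts bounds $\big|\int_{B_x}\nabla|f|\big|$ by the boundary trace integral of $|f|$ over $\partial B_x$, and comparing the good ball with the admissible enlarged ball $B(z,r_x+h)$ bounds the upper derivative at $s=r_x$ of $V(s)=\int_{B(z,s)}|f|$ by $(d-\beta)r_x^{-1}V(r_x)$, yielding the stated inequality with $C(d,\beta)=d-\beta$. One caveat: your mollification alternative does not work as stated, because $B_x$ need not be a good ball for the mollified functions $f_\varepsilon$, so the maximality comparison cannot be transferred from the smooth case; only the purely Sobolev-theoretic ingredient --- that the trace integral $\int_{\partial B_x}|f|\,dS$ is bounded above by $\liminf_{h\to 0^+}h^{-1}\big(V(r_x+h)-V(r_x)\big)$ for \emph{every} radius, not merely almost every radius --- should be established by approximation or by the polar-coordinate/trace argument, and your primary (trace-theory) route handles exactly that point.
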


In fact, Luiro and Madrid \cite[Lemma 2.7]{LM2017} obtained a further refinement which consists on an equality with a boundary term arising from integration-by-parts, although such a stronger statement will not be needed for the purposes of this paper.

\begin{remark}
The above lemmas continue to hold for $\beta=0$ if $x$ is such that $Mf(x)>f(x)$, which ensures $0 \notin \CR^0_x$.
\end{remark}





\subsection{Inside a compact set $K\subset \R^{d}, d>1$}

We first prove convergence inside a compact set $K$.

\begin{proposition}\label{prop:convergence compact}
Let $0<\beta < 1$, $f \in W^{1,1}(\R^d)$ and $\{f_j\}_{j \in \N} \subset W^{1,1}(\R^d)$ radial functions such that $\| f_j - f \|_{W^{1,1}(\R^d)} \to 0$. Then, for any compact set $K=\bar B(0,b)$,
\begin{equation}\label{goal in compact}
\| \nabla M_\beta f_j - \nabla M_\beta f \|_{L^q(K)} \to 0 \qquad \textrm{as \: $j \to \infty$,}
\end{equation}
where $q=d/(d-\beta)$.
\end{proposition}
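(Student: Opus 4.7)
The strategy is to verify the hypotheses of the Generalized Dominated Convergence Theorem (Theorem \ref{thm:gdct}) on the compact set $K$ with exponent $q=d/(d-\beta)$. The pointwise a.e.\ convergence $\nabla M_\beta f_j \to \nabla M_\beta f$ on $K$ is already provided by Lemma \ref{lemma:ae convergence derivatives}, invoking Lemma \ref{lemma:derivative Mbeta}(ii) since $f$ and $f_j$ are radial. It therefore remains to construct, for each $j$, a pointwise majorant $g_j \geq |\nabla M_\beta f_j|$ on $K$ together with a function $g\in L^q(K)$ such that $g_j(x)\to g(x)$ a.e.\ and $\|g_j-g\|_{L^q(K)}\to 0$.

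Following the heuristic indicated in the introduction, I would split $K=\bar B(0,a)\cup A(a,b)$ for a suitably chosen $0<a<b$ and construct the majorant differently on each region. On the annulus $A(a,b)$, where $|x|\geq a$, the one-dimensional continuity arguments of \cite{Madrid2017} can be adapted to the radial profile, at least in the regime of small good radii, because the singular factor $1/|x|$ that will appear in the estimates below is now harmless. On the inner ball $\bar B(0,a)$ the refined pointwise bounds from Section \ref{subsec:preliminaries} are needed: for each $x\neq 0$ and $j$, select a good ball $B_{x,j}=B(c_{x,j}x,r_{x,j})\in\CB_{x,j}^{\beta}$ and classify $x$ according to the two possibilities in \eqref{type of balls}. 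When $B_{x,j}\subseteq B(0,|x|)$, Lemma \ref{lemma:ye} gives
\begin{equation*}
|\nabla M_\beta f_j(x)| \;\leq\; r_{x,j}^{\beta}\intav_{B_{x,j}}|\nabla f_j(y)|\,\frac{|y|}{|x|}\dy,
\end{equation*}
so the weight $|y|/|x|\leq 1$ tames the singularity at the origin. When $B_{x,j}\subset B(0,|x|)^c$, I would combine Lemma \ref{lemma:refinement KS} with the radial structure of $f_j$, rewriting the average of $|f_j|$ over $B_{x,j}$ as a weighted integral of its radial profile over an annular shell and invoking a fundamental-theorem-of-calculus argument along the radius to produce a majorant directly in terms of $|\nabla f_j|$.

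In both cases one arrives at sublinear, maximal-type dominators acting on $|\nabla f_j|$ that, by the $L^1_{\mathrm{rad}}\to L^q$ estimates underpinning the boundedness proof in \cite{LM2017}, converge strongly in $L^q(K)$ to their counterparts for $f$, once Lemma \ref{lemma:convergence modulus in W11} is used to pass from $W^{1,1}$-convergence of $f_j$ to $L^1$-convergence of $\nabla|f_j|$. Theorem \ref{thm:gdct} then yields \eqref{goal in compact}. The main obstacle is the treatment of type-$B$ good balls inside $\bar B(0,a)$: their radii $r_{x,j}$ are not automatically uniformly bounded and the centres $c_{x,j}x$ may behave erratically in $j$, so a refinement of the \cite{LM2017} machinery is precisely what is needed to close the $L^q$ estimate at the endpoint in a way that survives the limit $j\to\infty$. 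A secondary technical point is to verify that the dominators $g_j$ so constructed admit an a.e.\ limit $g$; this can be arranged by passing to a subsequence and identifying the limiting good ball via Lemma \ref{lemma:limit of good balls}.
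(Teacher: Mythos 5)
Your outline assembles several of the right ingredients (the classification \eqref{type of balls} of good balls, Lemma \ref{lemma:ye}, Lemma \ref{lemma:ae convergence derivatives}, and a generalised dominated convergence scheme), but it stops exactly where the proof actually has to be done: you acknowledge that for the exterior-type good balls ``a refinement of the \cite{LM2017} machinery is precisely what is needed,'' and no such refinement is supplied. That refinement is the content of the argument, not a technical afterthought. The paper's treatment of those balls is in fact elementary: if $|\nabla M_\beta f_j(x)|\neq 0$, $B_{x,j}\subset B(0,|x|)^c$ and $r_{x,j}>|x|/4$, then for every $y\in B_{x,j}$ one has $r_{x,j}\geq |y|-|x|\geq |y|-4r_{x,j}$, hence $r_{x,j}\gtrsim |y|$ and $|x|\leq |y|$, which converts the average of $\nabla|f_j|$ over $B_{x,j}$ into the explicit majorant $u_j(x)=\int|\nabla|f_j||(y)\,\chi_{B(0,|y|)}(x)|y|^{-d}\dy$; no uniform bound on the radii and no fundamental-theorem-of-calculus argument along the radial profile is needed (your proposed use of Lemma \ref{lemma:refinement KS} there does not lead anywhere concrete).

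A second, structural problem is your plan to apply Theorem \ref{thm:gdct} in $L^q$ with pointwise dominators $g_j\geq|\nabla M_\beta f_j|$ converging in $L^q(K)$. The natural dominators produced by this analysis ($u_j$ above, and $v_j(x)=|x|^{-d}\int_{B(0,|x|)}|\nabla f_j(y)|\,|y|/|x|\dy$ from Lemma \ref{lemma:ye}) are controlled only in $L^1$, via Fubini, and there is no reason for them to converge in $L^q$. The paper circumvents this with a scaling identity you do not mention: since $q=d/(d-\beta)$, one has $r^{\beta q-d(q-1)}=1$, whence
\begin{equation*}
|\nabla M_\beta f_j(x)|^q \lesssim \|\nabla |f_j|\|_{1}^{\,q-1}\,\Bigl|\intav_{B_{x,j}}\nabla|f_j|(y)\dy\Bigr|,
\end{equation*}
so that the generalised dominated convergence theorem is applied to $|\nabla M_\beta f_j|^q$ in $L^1(K)$ with majorant $\|\nabla|f_j|\|_1^{q-1}(u_j+v_j)+|\nabla M_\beta^I f_j|^q$, and the $L^q$ statement is then recovered from the Br\'ezis--Lieb lemma. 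Without this device (or an equivalent one) your $L^q$ domination cannot be closed. Finally, the small-radius regime is not confined to an annulus: the paper handles it on all of $K$ minus a small ball $B(0,a)$ through the auxiliary operator $M_\beta^I$ (radii $r\leq |x|/4$), where a uniform lower bound $r_{x,j}^\beta\gtrsim \inf_{A(a,b)}M^I_\beta f>0$ --- available only because $\beta>0$ and because $M^I_\beta f_j\to M^I_\beta f$ uniformly on annuli --- yields a constant majorant; your sketch gestures at the one-dimensional arguments here but does not extract the lower bound on the radii that makes them work.
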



To this end, we start establishing the desired result for the auxiliary operator
\begin{equation*}\label{eq:MI}
    M^{I}_\beta f_j(x) :=  \sup_{\bar B(z,r)\ni x, r\leq |x|/4}
\frac{r^\beta}{|B(z,r)|}\int_{B(z,r)}|f(y)|\dy.
\end{equation*}
In the case $\beta=0$ this operator was introduced by Luiro \cite{Luiro2017}. Its fractional counterpart was implicitly studied in \cite{LM2017}, and in particular
\begin{equation}\label{bound MI}
\| \nabla M_\beta^I f \|_{L^q(\R^d)} \lesssim \| \nabla f \|_{L^1(\R^d)}
\end{equation}
holds for radial $f$ from the analysis on the set $E_3$ in \cite{LM2017}.

The following lemmas will be crucial to analyse the convergence of $M_\beta^I$ at the derivative level.
\begin{lemma}\label{MI smallnes around 0}
Let $0<\beta < 1$, $f \in W^{1,1}(\R^d)$ and $\{f_j\}_{j \in \N} \subset W^{1,1}(\R^d)$ radial functions such that $\| f_j - f \|_{W^{1,1}(\R^d)} \to 0$. Then, for every $\epsilon>0$ there is a ball $B_1=B(0,a)$ such that 
$$
\|\nabla M^{I}_{\beta}f\|_{L^{q}(B_1)}<\epsilon \quad \text{and}\ \quad  \|\nabla f\|_{L^{1}(B_1)}<\epsilon,
$$
and, moreover,
$$
\|\nabla M^{I}_{\beta}f_j\|_{L^{q}(B_1)}<\epsilon \quad \text{and}\ \quad  \|\nabla f_j \|_{L^{1}(B_1)}<\epsilon 
$$
for all $j\geq j(\epsilon)$.
\end{lemma}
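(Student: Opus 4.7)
The plan is to localize $f$ near the origin via a smooth cutoff and then apply the bound \eqref{bound MI}, exploiting the key feature of $M_\beta^I$: for any $x\in B(0,a)$, every competing ball $B(z,r)$ in $M_\beta^I f(x)$ satisfies $r\leq |x|/4\leq a/4$, and since $x\in \bar B(z,r)$ one has $B(z,r)\subset B(0,3a/2)$. The estimates on $\|\nabla f\|_{L^1(B_1)}$ and $\|\nabla f_j\|_{L^1(B_1)}$ are straightforward: first choose $a$ small enough that $\|\nabla f\|_{L^1(B(0,2a))}<\epsilon/2$ by absolute continuity of the Lebesgue integral, then handle $f_j$ via the triangle inequality using $\|\nabla f_j-\nabla f\|_{L^1(\R^d)}\to 0$.

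For $\|\nabla M_\beta^I f\|_{L^q(B_1)}$, I would introduce a radial smooth cutoff $\phi=\phi_a$ with $\phi\equiv 1$ on $B(0,3a/2)$, $\supp\phi\subset B(0,2a)$, and $\|\nabla\phi\|_\infty\leq C/a$. The localization observation gives $M_\beta^I f(x)=M_\beta^I(f\phi)(x)$ for every $x\in B(0,a)$, and hence $\nabla M_\beta^I f=\nabla M_\beta^I(f\phi)$ a.e.\ on $B(0,a)$. Since $f\phi$ is radial and lies in $W^{1,1}(\R^d)$, the bound \eqref{bound MI} applies and yields
$$
\|\nabla M_\beta^I f\|_{L^q(B_1)}\leq\|\nabla M_\beta^I(f\phi)\|_{L^q(\R^d)}\lesssim\|\nabla(f\phi)\|_{L^1(\R^d)}\leq\|\nabla f\|_{L^1(B(0,2a))}+\tfrac{C}{a}\|f\|_{L^1(B(0,2a))}.
$$

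The first summand is already controlled by the choice of $a$. The main obstacle is the second one: naively $\tfrac{1}{a}\|f\|_{L^1(B(0,2a))}$ does not obviously vanish as $a\to 0$, since a general $f\in W^{1,1}(\R^d)$ need not be bounded near the origin. The Sobolev embedding $W^{1,1}(\R^d)\hookrightarrow L^{d/(d-1)}(\R^d)$ saves the day: by Hölder's inequality
$$
\tfrac{1}{a}\|f\|_{L^1(B(0,2a))}\leq\tfrac{1}{a}|B(0,2a)|^{1/d}\|f\|_{L^{d/(d-1)}(B(0,2a))}\lesssim\|f\|_{L^{d/(d-1)}(B(0,2a))},
$$
the factor $|B(0,2a)|^{1/d}\sim a$ exactly cancelling the $1/a$ blowup of $\|\nabla\phi\|_\infty$; the right-hand side then tends to $0$ as $a\to 0$ by absolute continuity of $|f|^{d/(d-1)}$ at the origin.

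The treatment of $f_j$ is then identical: after fixing $a$ as above, one bounds $\|\nabla f_j\|_{L^1(B(0,2a))}$ and $\|f_j\|_{L^{d/(d-1)}(B(0,2a))}$ by the corresponding $f$-norms plus the error terms $\|\nabla(f_j-f)\|_{L^1(\R^d)}$ and $\|f_j-f\|_{L^{d/(d-1)}(\R^d)}\lesssim\|\nabla(f_j-f)\|_{L^1(\R^d)}$ (by Sobolev once more), both of which vanish as $j\to\infty$. Choosing $j(\epsilon)$ large enough completes the proof. Notice that the radial hypothesis enters only through \eqref{bound MI}, so the argument is quite robust and transfers to any setting in which the analogous $W^{1,1}\to \dot W^{1,q}$ bound for $M_\beta^I$ is available.
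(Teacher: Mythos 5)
Your proof is correct, and it rests on the same two pillars as the paper's: the localization property of $M^I_\beta$ (competing balls have radius $\le |x|/4$, hence stay in a slightly larger ball around the origin) and the a priori bound \eqref{bound MI}. The execution differs in two respects. First, for $f$ itself the paper simply notes that $\nabla M^I_\beta f \in L^q(\R^d)$ by \eqref{bound MI} applied globally, so $\|\nabla M^I_\beta f\|_{L^q(B(0,\delta))}<\epsilon$ for small $\delta$ by absolute continuity of the integral of $|\nabla M^I_\beta f|^q$ --- no cutoff is needed for this half. Second, for $f_j$ the paper uses the sharp truncation $f_j\chi_B$ and reads the right-hand side of \eqref{bound MI} as $\|\nabla f_j\|_{L^1(B)}$; strictly speaking $f_j\chi_B\notin W^{1,1}$, and the rigorous justification is that Luiro's formula only sees $\nabla|f_j|$ on the good balls, all of which lie in $B$. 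Your smooth cutoff $\phi$ sidesteps this issue entirely, since $f\phi$ is a genuine radial $W^{1,1}$ function to which \eqref{bound MI} applies verbatim, but it forces you to control the commutator term $a^{-1}\|f\|_{L^1(B(0,2a))}$; your use of H\"older plus the Sobolev embedding $W^{1,1}\hookrightarrow L^{d/(d-1)}$, with $|B(0,2a)|^{1/d}\sim a$ cancelling the $1/a$, does this correctly and is an extra ingredient the paper's route does not need. Both arguments are valid; yours is slightly longer but more scrupulous about the cutoff, and your closing remark that only \eqref{bound MI} uses radiality matches the paper's perspective.
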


\begin{proof}[Proof of Lemma \ref{MI smallnes around 0}]
For every $\epsilon>0$ there exists a ball $B=B(0,\delta)$ such that 
$\|\nabla f\|_{L^{1}(B)}<\epsilon$ and $\|\nabla M^I_{\beta}f\|_{L^{q}(B)}<\epsilon$, also there exists $j(\epsilon)$ such that $\|\nabla f_j-\nabla f\|_{L^1{(\R^d)}}<\epsilon$ for all $j\geq j(\epsilon)$.  Then
$\|\nabla f_{j}\|_{L^{1}(B)}<2\epsilon $ for all $j\geq j(\epsilon)$. Moreover 
$$
\|\nabla M^I_{\beta}f_j\|_{L^{q}(\frac{2}{3}B)}\leq \|\nabla M^I_{\beta}(f_j\chi_{B})\|_{L^{q}(B)}\lesssim \|\nabla f_j\|_{L^{1}(B)}<2\epsilon
$$
for all $j\geq j(\epsilon)$, where the first inequality follows by the definition of $M^I$ and the second one from the boundedness \eqref{bound MI}. The conclusion is obtained choosing $B_1=\frac{2}{3}B$.
\end{proof}


\begin{lemma}\label{MI uniformity on rings}
Let $0\leq\beta < 1$, $f \in W^{1,1}(\R^d)$ and $\{f_j\}_{j \in \N} \subset W^{1,1}(\R^d)$ radial functions such that $\| f_j - f \|_{W^{1,1}(\R^d)} \to 0$. Then, for any annulus $A(a,b):=\{ x \in \R^d:  a \leq |x| \leq b\}$ with $0<a<b/3<\infty$ we have that $\|f_j-f\|_{L^{\infty}(A(a,b))}\to 0$ as $j\to\infty$ then $\|M^{I}_{\beta}f_j-M^{I}_{\beta}f\|_{L^{\infty}(A(2a,2b/3))}\to0$. 
\end{lemma}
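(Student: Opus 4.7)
The plan is to exploit the localization built into $M_\beta^I$: for $x$ in the slightly smaller annulus $A(2a, 2b/3)$, the restriction $r \leq |x|/4$ in the definition of $M_\beta^I f(x)$ forces every competing ball to be trapped inside the larger annulus $A(a,b)$, which is precisely the set where uniform convergence of $f_j$ to $f$ is assumed. Sublinearity of $M_\beta^I$ then converts this geometric containment into the desired uniform convergence of $M_\beta^I f_j$ to $M_\beta^I f$ on $A(2a, 2b/3)$ essentially for free.

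Concretely, I would first record the pointwise sublinear bound
$$
|M_\beta^I f_j(x) - M_\beta^I f(x)| \leq M_\beta^I(|f_j - f|)(x),
$$
which is immediate since $M_\beta^I$ is a supremum of nonnegative averages. Next, I would verify the key geometric inclusion: if $x \in A(2a, 2b/3)$ and $x \in \bar B(z,r)$ with $r \leq |x|/4$, then every $y \in B(z,r)$ satisfies $|y - x| < 2r \leq |x|/2$, so $|x|/2 \leq |y| \leq 3|x|/2$; combined with $2a \leq |x| \leq 2b/3$, this yields $B(z,r) \subset A(a, b)$.

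With this containment in hand, the integrand $|f_j - f|$ on $B(z,r)$ can be pointwise replaced by $\|f_j - f\|_{L^\infty(A(a,b))}$, and the factor $r^\beta$ is bounded by $(b/6)^\beta$ using $r \leq |x|/4 \leq b/6$. Taking the supremum over admissible balls and then the $L^\infty$ norm over $x \in A(2a, 2b/3)$ gives
$$
\|M_\beta^I f_j - M_\beta^I f\|_{L^\infty(A(2a, 2b/3))} \leq (b/6)^\beta \|f_j - f\|_{L^\infty(A(a,b))},
$$
which tends to $0$ as $j \to \infty$ by hypothesis.

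There is no substantial obstacle: the entire point of the lemma is that the truncation $r \leq |x|/4$ confines every competing ball to a region where we already have uniform control of $f_j - f$, while the mild factor $r^\beta$ is absorbed by the compactness of the annulus. The same argument covers both the fractional case $\beta \in (0,1)$ and the classical case $\beta = 0$ without modification.
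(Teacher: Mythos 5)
The localization half of your argument is correct and is exactly the paper's closing step: for $x\in A(2a,2b/3)$ the constraint $r\leq |x|/4$ confines every competing ball to $A(a,b)$, and sublinearity gives $\|M_\beta^I f_j - M_\beta^I f\|_{L^\infty(A(2a,2b/3))}\leq (b/6)^\beta \|f_j-f\|_{L^\infty(A(a,b))}$. However, you have proved only half of the lemma. Despite the awkward phrasing ``we have that \dots then \dots'', the statement asserts \emph{both} that $\|f_j-f\|_{L^\infty(A(a,b))}\to 0$ \emph{and} that, as a consequence, $\|M_\beta^I f_j - M_\beta^I f\|_{L^\infty(A(2a,2b/3))}\to 0$; the only hypotheses are radiality and convergence in $W^{1,1}(\R^d)$. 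This reading is confirmed by how the lemma is invoked later: in the proof of Proposition \ref{prop:convergence compactMI}, the bound $\|f_j-f\|_{L^\infty(A(a,b))}\leq \|f\|_{L^\infty(A(a,b))}$ for large $j$ is cited as holding ``by Lemma \ref{MI uniformity on rings}''. You instead treat the uniform convergence on $A(a,b)$ as given (``by hypothesis''), so the substantive part of the proof is missing.

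That missing part is precisely where radiality---which your argument never uses---enters, and it cannot be dispensed with: for $d\geq 2$, convergence in $W^{1,1}(\R^d)$ does not imply local uniform convergence for general functions, so $\|f_j-f\|_{L^\infty(A(a,b))}\to 0$ is false without the radial assumption. The paper's argument writes $f(x)=\tilde f(|x|)$ and $f_j(x)=\tilde f_j(|x|)$, observes that $\|f_j-f\|_{W^{1,1}(\R^d)}\to 0$ translates into $\int_0^\infty |\tilde f_j-\tilde f|\,t^{d-1}\,dt\to 0$ and $\int_0^\infty |\tilde f_j'-\tilde f'|\,t^{d-1}\,dt\to 0$, and then applies the one-dimensional bound $|g(x)|\leq \frac{1}{b-a}\int_a^b|g|+\int_a^b|g'|$ (from the fundamental theorem of calculus plus averaging) to $g=\tilde f_j-\tilde f$ on $[a,b]$, where the weight $t^{d-1}\geq a^{d-1}$ is bounded below since $a>0$. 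This yields $\|f_j-f\|_{L^\infty(A(a,b))}\to 0$, after which your localization step completes the proof. You need to supply this first step.
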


\begin{proof}
Let $\tilde{f}, \tilde{f}_j: (0,\infty) \to \R$ be such that $f(x)=\tilde{f}(|x|)$ and $f_j(x)=\tilde{f}_j(|x|)$. Note that $\nabla f (x) = \tilde{f}'(|x|) \frac{x}{|x|}$ and $\nabla f_j (x) = \tilde{f}_j'(|x|) \frac{x}{|x|}$. By hypothesis one has that
\begin{equation}\label{eq:hyp radial convergence}
\int_{0}^\infty |\tilde{f}_j(t)-\tilde{f}(t)| t^{d-1}  \dt  \to 0 \quad \text{and} \quad \int_{0}^\infty |\tilde{f}'_j(t)-\tilde{f}'(t)| t^{d-1}  \dt \to 0
\end{equation}
as $j \to \infty$. Note that given $g \in W^{1,1}((0,\infty))$, by the Fundamental Theorem of Calculus,
$$
|g(x)| \leq |g(y)| + \int_{a}^b |g'(t)| \dt
$$
for any $x, y \in [a,b]$. Averaging over $y \in [a,b]$ one has
$$
|g(x)| \leq \frac{1}{b-a} \int_{a}^b |g(t)| \dt + \int_{a}^b |g'(t)| \dt.
$$
Applying this for $g=\tilde{f}_j- \tilde{f}$, it follows that for $a \leq |x| \leq b$,
$$
|f_j(x)-f(x)| \leq \frac{1}{(b-a) a^{d-1}} \int_a^b |\tilde{f}_j(t) - \tilde{f}(t)| t^{d-1}\dt + \frac{1}{a^{d-1}} \int_a^b |\tilde{f}_j ' (t) - \tilde{f}'(t)| t^{d-1} \dt
$$
and using \eqref{eq:hyp radial convergence} it follows that $\| f_j - f \|_{L^\infty(A(a,b))} \to 0$ as $j \to \infty$.

Finally, note that for $2a \leq |x| \leq 2b/3$, one has $M^I_\beta f = M^I_\beta (f \chi_{A(a,b)})$, as the admissible radii $r$ in the definition of $M^I_\beta$ satisfy $r \leq |x|/4$.  
\end{proof}


\noindent The next Lemma follows similarly to Lemma \ref{lemma:ae convergence derivatives}.
\begin{lemma}\label{MI point conver}
Let $0<\beta < 1$, $f \in W^{1,1}(\R^d)$ and $\{f_j\}_{j \in \N} \subset W^{1,1}(\R^d)$ radial functions such that $\| f_j - f \|_{W^{1,1}(\R^d)} \to 0$. Then
\begin{equation}\label{goal in compact}
 \nabla M^{I}_\beta f_j(x) \to \nabla M^{I}_\beta f(x) \quad  \ a.e\ \quad \textrm{as \: $j \to \infty$}.
\end{equation}
\end{lemma}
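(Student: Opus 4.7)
The plan is to adapt the proof of Lemma \ref{lemma:ae convergence derivatives} to the auxiliary operator $M_\beta^I$, exploiting that the built-in constraint $r \leq |x|/4$ supplies the upper bound on good radii for free. For $j \geq 0$ (with $f_0 = f$), I would introduce the family of good balls
$$
\CB_x^{I,\beta}(f_j) := \Big\{ B(z,r) : x \in \bar B(z,r), \: r \leq |x|/4, \: M_\beta^I f_j(x) = r^\beta \intav_{B(z,r)} |f_j| \Big\},
$$
together with the corresponding set of good radii $\CR_x^{I,\beta}(f_j)$. Note that if $B(z,r)\in\CB_x^{I,\beta}(f_j)$ then $|z|\leq |x|+r\leq 5|x|/4$, so both centers and radii are a priori bounded in terms of $x$.

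Two key ingredients are needed. The first is the direct analog of Lemma \ref{lemma:limit of good balls} for $M_\beta^I$: for a.e. $x\neq 0$, if $B(z_j,r_j)\in\CB_x^{I,\beta}(f_j)$ and $(z_j,r_j)\to (z,r)$ along a subsequence, then $B(z,r)\in\CB_x^{I,\beta}(f)$. This would be proved verbatim as Lemma \ref{lemma:limit of good balls}, with the additional remark that the constraint $r_j\leq |x|/4$ passes to the limit and that the case $r=0$ is again excluded by the Lebesgue-differentiation plus weak-$(1,1)$ argument. The second ingredient is Luiro's formula for $M_\beta^I$, namely
$$
\nabla M_\beta^I f(x) = r^\beta \intav_{B(z,r)} \nabla |f|(y)\dy
$$
for any $B(z,r)\in\CB_x^{I,\beta}(f)$; this is obtained as in Lemma \ref{lemma:derivative Mbeta} by carrying out the approximate-derivative computation of \cite{HM2010, LM2017} and invoking the boundedness \eqref{bound MI} to match the weak and approximate derivatives, the a.e. classical differentiability in the radial case again being reduced to the one-dimensional result of \cite{CM2015}.

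With these in hand, the remainder of the argument would follow the scheme of Lemma \ref{lemma:ae convergence derivatives}. For $x$ outside a suitable null set, one has $\CR_x^{I,\beta}(f)\subset [\delta(x), |x|/4]$ for some $\delta(x)>0$, and the analog of Lemma \ref{lemma:limit of good balls} forces $\CR_x^{I,\beta}(f_j)\subset [\delta/2, |x|/4]$ for $j$ large enough; otherwise one could extract a convergent subsequence of radii in $[0,\delta/2]$ and contradict that analog. The uniform lower bound on $r_j$ combined with Luiro's formula and Lemma \ref{lemma:convergence modulus in W11} then bounds $\{|\nabla M_\beta^I f_j(x)|\}_j$ uniformly in $j$. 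Any convergent subsequence $\nabla M_\beta^I f_{j_k}(x)$ can be rewritten, after passing to a further subsequence so that $(z_{j_k}, r_{j_k})\to (z,r)$ with $B(z,r)\in\CB_x^{I,\beta}(f)$, as the limit of $r_{j_k}^\beta \intav_{B(z_{j_k},r_{j_k})}\nabla |f_{j_k}|$; combining Lemma \ref{lemma:convergence modulus in W11} with the dominated convergence theorem applied to the difference of rescaled characteristic functions identifies this common accumulation point with $\nabla M_\beta^I f(x)$, which in turn yields convergence of the full sequence. The main (but mild) obstacle is the verification of Luiro's formula for the non-translation-invariant operator $M_\beta^I$; since for $|y|$ close to $|x|$ the admissible families at $y$ and $x$ are comparable, the standard approximate-derivative computation carries over without essential modification, and everything else is formally identical to the proof of Lemma \ref{lemma:ae convergence derivatives}.
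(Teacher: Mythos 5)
Your proposal is correct and follows essentially the same route as the paper: the paper's entire proof of this lemma is the remark that it ``follows similarly to Lemma~\ref{lemma:ae convergence derivatives}'', and your write-up is precisely that adaptation (good balls for $M_\beta^I$ with the built-in upper bound $r\le|x|/4$ on the radii, the analogue of Lemma~\ref{lemma:limit of good balls}, Luiro's formula justified via the boundedness \eqref{bound MI}, and the subsequence argument). The one point deserving a word of care --- left implicit by the paper as well --- is that the exclusion of $r=0$ rests on $M_\beta^I f(x)>0$, which, unlike for the full $M_\beta$, can fail on the set where $f$ vanishes a.e.\ on $B(x,|x|/2)$; there one instead notes that $\nabla|f|=0$ a.e.\ on that ball and controls $r_j^{\beta}\intav_{B_j}|\nabla|f_j||\lesssim_x M\big(|\nabla|f_j|-\nabla|f||\big)(x)$ by the weak $(1,1)$ inequality, as in the proof of Lemma~\ref{lemma:limit of good balls}.
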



In view of the previous lemmas, the desired convergence result for $M^I_\beta$ on radial functions can be obtained using the one-dimensional arguments in \cite{Madrid2017}. Those arguments cannot be extended to $M_\beta$, as the full maximal operator lacks the uniform convergence obtained in Lemma \ref{MI uniformity on rings}. This is in contrast with $d=1$, where uniform convergence follows for $M_\beta$ for convergent sequences of functions in $W^{1,1}(\R)$. 

\begin{proposition}\label{prop:convergence compactMI}
Let $0<\beta < 1$, $f \in W^{1,1}(\R^d)$ and $\{f_j\}_{j \in \N} \subset W^{1,1}(\R^d)$ radial functions such that $\| f_j - f \|_{W^{1,1}(\R^d)} \to 0$. Then, for any compact set $K=\bar B(0,b)$,
\begin{equation}\label{goal in compact}
\| \nabla M^{I}_\beta f_j - \nabla M^{I}_\beta f \|_{L^q(K)} \to 0 \qquad \textrm{as \: $j \to \infty$,}
\end{equation}
where $q=d/(d-\beta)$.
\end{proposition}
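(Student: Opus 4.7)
The plan is to split $K=\bar B(0,b)$ into a small ball around the origin and a closed annulus, and to handle each piece separately. This parallels the one-dimensional strategy of \cite{Madrid2017}, where the uniform convergence of $M^c_\beta f_j$ on all of $\R$ (a consequence of the Sobolev embedding $W^{1,1}(\R)\hookrightarrow L^\infty(\R)$) drives the argument; in higher dimensions, radiality and the restriction to an annulus restore a suitable form of uniform convergence via Lemma \ref{MI uniformity on rings}, with the interior maximal operator $M^I_\beta$ tailored precisely for this strategy.

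First I would fix $\epsilon>0$ and apply Lemma \ref{MI smallnes around 0} to produce $a_0>0$ and $j_0\in\N$ such that $\|\nabla M^I_\beta f\|_{L^q(B(0,a_0))}<\epsilon$ and $\|\nabla M^I_\beta f_j\|_{L^q(B(0,a_0))}<\epsilon$ for all $j\geq j_0$. By the triangle inequality, $\|\nabla M^I_\beta f_j-\nabla M^I_\beta f\|_{L^q(B(0,a_0))}\leq 2\epsilon$, reducing the problem to $L^q$-convergence on the closed annulus $A:=A(a_0,b)$. Applying Lemma \ref{MI uniformity on rings} to the slightly enlarged annulus $A(a_0/2,3b/2)$ then yields $\|f_j-f\|_{L^\infty(A(a_0/2,3b/2))}\to 0$ and $\|M^I_\beta f_j-M^I_\beta f\|_{L^\infty(A)}\to 0$, and in particular a uniform bound $M:=\sup_j\|f_j\|_{L^\infty(A(a_0/2,3b/2))}<\infty$. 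Lemma \ref{MI point conver} provides $\nabla M^I_\beta f_j\to\nabla M^I_\beta f$ a.e.\ on $A$.

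To upgrade this to $L^q(A)$ convergence, I would apply the Generalised Dominated Convergence Theorem (Theorem \ref{thm:gdct}). The refined Kinnunen--Saksman estimate (Lemma \ref{lemma:refinement KS}) gives $|\nabla M^I_\beta f_j(x)|\leq C\,r_{x,j}^{-1}M^I_\beta f_j(x)$, where $r_{x,j}$ denotes the radius of a good ball at $x$. For $\eta>0$, set $A_\eta:=\{x\in A:M^I_\beta f(x)\geq\eta\}$. On $A_\eta$ and for $j$ large enough, $M^I_\beta f_j(x)\geq\eta/2$; combined with the trivial upper bound $M^I_\beta f_j(x)\leq r_{x,j}^\beta M$, this forces a uniform lower bound $r_{x,j}\geq(\eta/(2M))^{1/\beta}=:\rho_0>0$. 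Hence $|\nabla M^I_\beta f_j(x)|\leq C\rho_0^{-1}\|M^I_\beta f_j\|_{L^\infty(A)}$ on $A_\eta$, supplying an $L^q$-convergent dominant and yielding $\nabla M^I_\beta f_j\to\nabla M^I_\beta f$ in $L^q(A_\eta)$.

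The main obstacle is the degenerate set $A\setminus A_\eta=\{x\in A:M^I_\beta f(x)<\eta\}$, on which the uniform lower bound on $r_{x,j}$ collapses. On $\{M^I_\beta f=0\}$ one has $f=0$ a.e.\ on $B(x,|x|/2)$ for each such $x$, so $M^I_\beta f$ attains a global minimum there and $\nabla M^I_\beta f=0$ by Lemma \ref{lemma:derivative Mbeta}. On the complement $\{0<M^I_\beta f<\eta\}$, whose measure tends to $0$ as $\eta\to 0$, the $L^q$-integrability of $\nabla M^I_\beta f$ controls its own norm, while the uniform control of $\nabla M^I_\beta f_j$ on this set requires an equi-integrability argument for $\{|\nabla M^I_\beta f_j|^q\}$. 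Establishing this equi-integrability is expected to be the principal technical hurdle; once in place, the result follows by letting $\eta\to 0$ and $\epsilon\to 0$.
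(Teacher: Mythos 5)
Your skeleton matches the paper's: reduce to a closed annulus via Lemma \ref{MI smallnes around 0}, invoke the uniform convergence of $f_j$ and $M^I_\beta f_j$ on annuli from Lemma \ref{MI uniformity on rings} and the a.e.\ convergence of the gradients from Lemma \ref{MI point conver}, and then try to extract a uniform lower bound on the good radii $r_{x,j}$ so as to dominate $|\nabla M^I_\beta f_j|$. The difference is where you look for that lower bound, and it is precisely there that your argument is incomplete. You only obtain $r_{x,j}\gtrsim \eta^{1/\beta}$ on the super-level set $A_\eta=\{M^I_\beta f\geq\eta\}$, and you leave the complementary set $\{M^I_\beta f<\eta\}$ to an ``equi-integrability argument for $\{|\nabla M^I_\beta f_j|^q\}$'' that you explicitly do not supply. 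That is the whole difficulty, not a technical afterthought: on that set the good radii of the $f_j$ can a priori degenerate and nothing in Lemmas \ref{MI smallnes around 0}--\ref{MI point conver} gives uniform control of $|\nabla M^I_\beta f_j|$ there. Note also that on $\{M^I_\beta f=0\}$ your observation only kills $\nabla M^I_\beta f$, not $\nabla M^I_\beta f_j$, so even that piece is not actually closed. As written, the proof is therefore not complete.

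The paper avoids the level-set decomposition altogether: it sets $C_A:=\inf_{x\in A(a,b)}M^I_\beta f(x)$ and $C_{A,j}:=\inf_{x\in A(a,b)}M^I_\beta f_j(x)$, uses the uniform convergence of Lemma \ref{MI uniformity on rings} to get $C_{A,j}\geq C_A/2$ for large $j$, and deduces $r_{x,j}\gtrsim C_A^{1/\beta}$ at \emph{every} point of the annulus; Luiro's formula then gives the crude bound $|\nabla M^I_\beta f_j(x)|\lesssim C_A^{-(d-\beta)/\beta}\|\nabla|f_j|\,\|_1\lesssim 1$ uniformly on $A(a,b)$, and ordinary dominated convergence against a constant finishes the proof. (This hinges on the positivity of the global infimum $C_A$, which is exactly the quantity your $\eta$ is standing in for; working with the infimum rather than with level sets is what removes the exceptional set.) A secondary point: your pointwise bound $|\nabla M^I_\beta f_j(x)|\leq C r_{x,j}^{-1}M^I_\beta f_j(x)$ invokes Lemma \ref{lemma:refinement KS}, which is stated for good balls of the unconstrained operator $M_\beta$; for $M^I_\beta$ the maximizing radius may sit on the constraint $r=|x|/4$, so the Kinnunen--Saksman first-variation argument behind that lemma does not automatically apply. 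The paper sidesteps this by bounding the averaged gradient directly by $r_{x,j}^{\beta-d}\|\nabla|f_j|\,\|_1$, which needs no optimality of the ball.
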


\begin{proof}
By Lemma \ref{MI smallnes around 0} it suffices to show the convergence for any annulus $A(a,b)$ with $0<a<b/3<\infty$. By Lemma \ref{MI point conver} and the dominated convergence theorem, it suffices to show that there exist a constant $C>0$ and $j_0\in\N$ such that
$$
|\nabla M_{\beta}^If_{j}(x)|\leq C \qquad \text{for all $\, x\in A(a,b) \, $ and all $ \,j \geq j_0\,$,}
$$
as constants are integrable on bounded domains.

To this end, let  $ C_ A,C_{A,j}>0$ be such that
$$
\inf_{x \in A(a,b)} M_\beta^I f (x) = C_A \qquad \textrm{and} \qquad  \inf_{x \in A(a,b)} M_\beta^I f_j (x) = C_{A,j};
$$
note that these constants always exist provided $f$ is not identically 0. By Lemma \ref{MI uniformity on rings} one has $\| M_\beta^I f_j - M_\beta^I f \|_{L^\infty(A(a,b))} \to 0$ as $j \to \infty$, so there exists $j_1(A) \in \N$ such that $C_{A,j} > C_{A}/2$ for all $j>j_1(A)$. For each $x \in A(a,b)$, let $B_{x,j}:=B(z_{x,j}, r_{x,j}) \in \CB_{x,j}^\beta$. Then
\begin{equation}\label{bound xx}
C_{A}/2 \leq  r_{x,j}^\beta  \intav_{B_{x,j}} |f_j| \leq r_{x,j}^{\beta} \| f_j \|_{L^\infty(A(a,b))} \lesssim r_{x,j}^{\beta} \| f \|_{L^\infty(A(a,b))}
\end{equation}
for $j > j_0:= \max \{j_1(A), j_2(A)\}$ where $j_2(A) \in \N$ is large enough so that $\| f_j - f \|_{L^\infty(A(a,b))}\leq \| f \|_{L^\infty(A(a,b))}$, which holds by Lemma \ref{MI uniformity on rings}. As $\beta>0$, one has the uniform lower bound $r_{x,j} \gtrsim (C_A)^{1/\beta}=: \bar C_A>0$.

This uniform lower bound on the radius together with Lemmas \ref{lemma:derivative Mbeta} and \ref{lemma:convergence modulus in W11} yield the desired bound
\begin{align*}\label{uniform bound on K}
|\nabla M_\beta^I f_j (x)| & \leq \Big| r_{x,j}^\beta  \intav_{B_{x,j}} \nabla |f_j|(y) \dy \Big| \nonumber\\
 & \lesssim \frac{1}{(\bar C_{A} )^{d-\beta}} \big(  \| \nabla  |f_j| - \nabla |f| \|_{L^1(\R^d)} +  \| \nabla |f| \|_{L^1(\R^d)} \big) \nonumber\\
 & \lesssim 1
\end{align*}
for all $x \in A(a,b)$ and all $j> j_0$. 

\end{proof}

\begin{remark}
The arguments used for to prove Proposition \ref{prop:convergence compactMI} continue to work for the centered version of $M^I_\beta$.
\end{remark}


Now we are in position to obtain our desired convergence result for the full $M_{\beta}$ at the derivative level on compact sets.

\begin{proof}[Proof of Proposition \ref{prop:convergence compact}]
Set $f_0=f$, and let $E_j$ be the set of measure zero for which Lemma \ref{lemma:derivative Mbeta} fails for $f_j$. The set $E:=\cup_{j\geq 0} E_j$ continues to have measure zero. Let $F$,  $G$ and $H$ be the set of measure zero for which Lemmas \ref{lemma:limit of good balls}, \ref{lemma:ae convergence derivatives} and Remark \ref{remark:ae convergence Mbeta} fail respectively. It then suffices to show \eqref{goal in compact} for $K$ replaced by $\widetilde{K}:=K \backslash(E \cup F \cup G \cup H)$, which for ease of notation is relabelled as $K$.

For all $j>0$ we have $K=K^{0}_{j}\cup U^{}_{j}\cup V^{}_{j}\cup W^{}_{j}$, where 
$K_{j}^{0}=\{x\in K: \nabla M_{\beta}f_{j}(x)=0\}$ and
$$U^{}_{j}=\{x\in K\setminus{K_{j}^{0}}: \exists \ B_{x,j} \in \CB^\beta_{x,j} \ \ \text{with}\ \  r_{x,j}> |x|/4\ \ \text{and}\ \  B_{x,j}\subset B(0,|x|)^{c}\},$$
$$V^{}_{j}=\{x\in K\setminus{K_{j}^{0}} : \exists \ B_{x,j} \in \CB^\beta_{x,j} \ \ \text{with}\ \ r_{x,j}> |x|/4\  \ \text{and}\ \ B_{x,j}\subset B(0,|x|)\}$$
and
$$
W^{}_{j}=\{ x \in K \setminus K_{j}^{0} : 
M_{\beta}f(x)=M^{I}_{\beta}f(x)\}.$$
The definitions of $U_j$ and $V_j$ are motivated by the two types of balls that one needs to consider when $|\nabla M_\beta f_j (x)| \neq 0$: see the discussion at the beginning of Section \ref{subsec:preliminaries} and display \eqref{type of balls}. The additional constraint $r_{x,j} > |x|/4$ is included because the case of small radii has already been analysed via the operator $M_\beta^I$. Define the functions
$$
u_j(x):= \int_{\R^{d}}|\nabla|f_{j}|(y)|\frac{\chi_{B(0,|y|)}(x)}{|y|^{d}}\dy,
$$
and
$$
v_j(x):= \frac{1}{|x|^{d}}\int_{B(0,|x|)}|\nabla f_{j}(y)|\frac{|y|}{|x|}\dy.
$$
By Lemma \ref{lemma:derivative Mbeta},
\begin{equation}\label{eq:q-1 out}
    |\nabla M_\beta f_j(x)|^q \leq \frac{1}{(\omega_d)^{q-1}} \| \nabla |f_j| \|_{1}^{q-1} \Big|  \intav_{B_{x,j}} \nabla |f_j|(y) \dy \Big| \quad \textrm{for all \: $x \in K$}.
\end{equation}
Note that as $|\nabla M_\beta f_j(x)| \neq 0$ on $U_j \cup V_j$, the good balls $B_{x,j} \in \CB_{x,j}^\beta$ are of the type described in the previous subsection: $x \in \partial B_{x,j}$ and the center of $B_{x,j}$ belongs to the line joining $x$ and the origin; this features in the following bounds on $U_j$ and $V_j$.

For every $x\in U_j$, if $y\in B_{x,j}$ one has $r_{x,j}\geq|y|-|x|\geq |y|-4r_{x,j}$ and $|x|\leq |y|$. Then
$$
\Big| \intav_{B_{x,j}}\nabla|f_{j}|(y) \dy \Big|
 \leq \frac{5^d}{\omega_d}\int_{\R^{d}}|\nabla|f_{j}|(y)|\frac{\chi_{B(0,|y|)}(x)}{|y|^{d}}\dy = \frac{5^d}{\omega_d} u_j(x) \quad \text{on}\:\: U_j.
$$
For every $x \in V_j$, one has $|x|/4< r_{x,j} \leq  2|x|$ and Lemma \ref{lemma:ye} then yields
\begin{equation*}
\Big| \intav_{B_{x,j}}\nabla|f_{j}|(y) \dy \Big| \leq \frac{4^d}{\omega_d |x|^d} \int_{B(0,|x|)}|\nabla f_{j}(y)|\frac{|y|}{|x|}\dy = \frac{4^d}{\omega_d} v_j(x) \quad \text{on}\:\: V_j.
\end{equation*}
Using \eqref{eq:q-1 out} in $U_j \cup V_j$ and the previous estimates, for all $j >0$,
\begin{equation}\label{K conclusion}
|\nabla M_{\beta}f_{j}(x)|^{q}\lesssim \| \nabla |f_j| \|_1^{q-1} \Big( u_{j}(x)+v_{j}(x) \Big) +|\nabla M_\beta^I f_j (x)|^q  
\quad  \text{on}\:\:  K .
\end{equation}
The desired result will follow from an application of the generalised dominated convergence theorem (Theorem \ref{thm:gdct}) for functions on $L^1$. Indeed, a successful application of that theorem would yield
$$
\| |\nabla M_\beta f_j|^q - |\nabla M_\beta f|^q \|_{L^1(K)} \to 0 \quad \textrm{as $j \to \infty$},
$$
and consequently
$$
\int_{K} |\nabla M_\beta f_j|^q \to \int_{K} |\nabla M_{\beta} f|^q \quad \textrm{as $j \to \infty$}.
$$
Convergence on $L^q(K)$ would now follow from the Brézis--Lieb lemma (see Subsection \ref{ae+BrezisLieb}). Therefore, it suffices to verify the hypothesis of Theorem \ref{thm:gdct} with the sequences involved in \eqref{K conclusion}.

Concerning the left-hand-side, the estimate $\|\nabla M_\beta f \|_q \lesssim \| \nabla f \|_1$ in \cite{LM2017} implies that the sequence $\{ |\nabla M_\beta f_j(x)|^q \}_{j \in \N}$ is on $L^1(K)$. Moreover, Lemma \ref{lemma:ae convergence derivatives} ensures that $|\nabla M_\beta f_j|^q \to |\nabla M_\beta f|^q$ a.e. as $j \to \infty$, satisfying the desired hypothesis.

Concerning the right-hand-side, by Lemma \ref{MI point conver} and Proposition \ref{prop:convergence compactMI} one has that
\begin{align}\label{w_j conv}
   |\nabla M_\beta^I f_j(x)| \to |\nabla M_\beta^I f(x)| \quad & \textrm{and} \quad \| \nabla M_\beta^I f_j - \nabla M_\beta^I f \|_{L^q(K)} \to 0
   \intertext{as $j \to \infty$, so it suffices to show}
   \label{u_j conv}
    u_j(x) \to u(x) \quad  & \textrm{and} \quad \| u_j - u \|_1 \to 0 \quad \textrm{as $j \to \infty$}, \\
\label{v_j conv}
    v_j(x) \to v(x) \quad & \textrm{and} \quad \| v_j - v \|_1 \to 0 \quad \textrm{as $j \to \infty$}
\end{align}
where $u$ and $v$ are defined analogously to $u_j$ and $v_j$ respectively but with $f_j$ replaced by $f$.
Indeed, Lemma \ref{lemma:convergence modulus in W11} ensures that $\| \nabla |f_j| - \nabla |f| \|_1 \to 0$ as $j \to \infty$, so together with \eqref{w_j conv}, \eqref{u_j conv} and \eqref{v_j conv} this implies that the right-hand-side on \eqref{K conclusion} converges a.e. and on $L^1$, as desired for the application of Theorem \ref{thm:gdct}.

The rest of the proof is devoted to verify \eqref{u_j conv} and \eqref{v_j conv}.




\subsubsection{The case of $u_j$}
For any $x \neq 0$, one trivially has
\begin{align*}
|u_{j}(x)-u(x)| & \leq  \int_{\R^{d}}|\nabla|f_{j}|(y) - \nabla |f| (y)|\frac{\chi_{B(0,|y|)}(x)}{|y|^{d}}\dy \\
& \leq  \frac{1}{|x|^{d}}\| \nabla|f_{j}|-\nabla|f|\|_{1} \to 0 \quad \textrm{as $j \to \infty$}
\intertext{as $|y|\geq |x|$, so $u_j \to u $ a.e. as $j \to \infty$. Moreover, by Fubini's theorem}
\|u_{j}-u \|_{1} & \leq \int_{\R^d} |\nabla |f_j| (y) - \nabla |f|(y)| \int_{\R^d} \frac{\chi_{B(0,|y|)} (x) }{|y|^d} \dx \dy \\
& \lesssim
\|\nabla|f_{j}|-\nabla|f|\|_{1}\to0 \quad \textrm{as $j \to \infty$}.
\end{align*}




\subsubsection{The case of $v_j$}

Similarly, for any $x \neq 0$,
\begin{align*}
|v_{j}(x)-v(x)| & \leq   \frac{1}{|x|^{d}} \int_{B(0,|x|)} |\nabla f_j(y) - \nabla f(y)| \frac{|y|}{|x|} \dy \\
 & \leq  \frac{1}{|x|^{d}}\|\nabla f_{j}-\nabla f\|_{1} \to 0 \quad  \textrm{as $j\to\infty$}.
\intertext{Moroever, by Fubini's theorem and a change to polar coordinates one has}
\|v_{j}-v \|_{1}&\leq   \int_{\R^d} |\nabla f_{j}(y)-\nabla f(y)| |y| \int_{B(0, |y|)^c} |x|^{-d-1} \dx \dy  \\
&\lesssim \|\nabla f_{j} -\nabla f \|_{1} \to 0 \quad \textrm{as $j \to \infty$},
\end{align*}
as desired. This concludes the proof of Proposition \ref{prop:convergence compact}.






\end{proof}




\subsection{Smallness outside a compact set $3K$}\label{subsec:outside compact}

In order to conclude the proof of Theorem \ref{thm:beta smaller than 1}, it suffices to show smallness outside a compact set. Our argument relies on Lemma \ref{lemma:refinement KS}, and therefore continues to work for the case $\beta=0$, the centered maximal function $M_\beta^c$ and does not require any radial hypothesis on the functions.

\begin{proposition}\label{prop:smallness}
Let $0	\leq  \beta < d$, $f \in W^{1,1}(\R^d)$ and $\{f_j\}_{j \in \N} \subset W^{1,1}(\R^d)$ such that $\| f_j - f \|_{W^{1,1}(\R^d)} \to 0$. Assume that $\mathcal{M}_\beta \in \{M_\beta, M_\beta^c\}$ satisfies
\begin{equation}\label{sobolev boundedness hyp}
\| \nabla \mathcal{M}_\beta f \|_{q} \leq \| \nabla f \|_1,
\end{equation}
where $q=d/(d-\beta)$. Then, for any $\varepsilon>0$ there exists a compact set $K$ and $j_\varepsilon>0$ such that
$$
\| \nabla \CM_\beta f_j - \nabla \CM_\beta f \|_{L^q( ( 3K)^c )} < \varepsilon
$$
for all $j \geq j_\varepsilon$.
\end{proposition}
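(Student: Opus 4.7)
The plan is to bound $\|\nabla \CM_\beta f_j\|_{L^q((3K)^c)}$ and $\|\nabla \CM_\beta f\|_{L^q((3K)^c)}$ separately and combine via the triangle inequality. Since $\nabla \CM_\beta f \in L^q(\R^d)$ by \eqref{sobolev boundedness hyp}, absolute continuity of the $L^q$-integral lets me pick $R>0$ large enough that $\|\nabla \CM_\beta f\|_{L^q(B(0,3R)^c)} < \varepsilon/3$; by enlarging $R$ further I also arrange $\|\nabla f\|_{L^1(B(0,R/4)^c)} < \varepsilon$ and $\|f\|_1/R < \varepsilon$, and set $K := \bar B(0, R)$. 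The main task is then to obtain an analogous uniform tail bound for the sequence $f_j$.

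To that end, let $\eta_R$ be a smooth cutoff with $\eta_R = 1$ on $B(0, R/4)$, $\eta_R = 0$ outside $B(0, R/2)$, $|\nabla \eta_R| \leq C/R$ (chosen radial when working with radial functions). Let $j_\varepsilon$ be such that $\|f_j - f\|_{W^{1,1}(\R^d)} < \varepsilon$ for $j \geq j_\varepsilon$, and define $g_{R,j} := f_j(1 - \eta_R)$. The Leibniz rule together with the choices above yields
$$\|\nabla g_{R,j}\|_1 \leq \|\nabla f_j\|_{L^1(B(0,R/4)^c)} + (C/R)\|f_j\|_1 \lesssim \varepsilon,$$
so applying the hypothesis \eqref{sobolev boundedness hyp} to $g_{R,j}$ gives $\|\nabla \CM_\beta g_{R,j}\|_q \lesssim \varepsilon$.

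The heart of the argument is then a dichotomy based on the good balls of $f_j$. I would split $(3K)^c = \Omega_{1,j} \cup \Omega_{2,j}$, where $\Omega_{1,j}$ consists of points $x$ admitting some good ball $B_{x,j} \in \CB_{x,j}^\beta$ disjoint from $B(0, R/2)$. On $\Omega_{1,j}$ one has $f_j = g_{R,j}$ on $B_{x,j}$, from which $M_\beta g_{R,j}(x) = M_\beta f_j(x)$ follows by comparison (since $|g_{R,j}|\leq|f_j|$), showing $B_{x,j}$ is good for $g_{R,j}$ too; applying Luiro's formula (Lemma \ref{lemma:derivative Mbeta}) to both $f_j$ and $g_{R,j}$ with this same ball yields the pointwise identity $\nabla \CM_\beta f_j(x) = \nabla \CM_\beta g_{R,j}(x)$, whence $\|\nabla \CM_\beta f_j\|_{L^q(\Omega_{1,j})} \leq \|\nabla \CM_\beta g_{R,j}\|_q \lesssim \varepsilon$. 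On $\Omega_{2,j}$ every good ball of $f_j$ meets $B(0, R/2)$, which combined with $|x| > 3R$ forces $r_{x,j} \gtrsim |x|$ by elementary geometry (both for centered and non-centered balls), and Lemma \ref{lemma:refinement KS} then gives
$$|\nabla \CM_\beta f_j(x)| \leq C r_{x,j}^{\beta-1} \intav_{B_{x,j}} |f_j| \leq C r_{x,j}^{\beta-1-d} \|f_j\|_1 \lesssim |x|^{\beta-1-d}\|f_j\|_1.$$
A direct integration, noting $q(\beta-1-d) + d = -d/(d-\beta) < 0$, shows $\|\nabla \CM_\beta f_j\|_{L^q(\Omega_{2,j})} \lesssim \|f_j\|_1\, R^{-1}$, which is again controlled by $\varepsilon$ for $R$ sufficiently large.

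I expect the main obstacle to be the good-ball matching argument on $\Omega_{1,j}$: one must verify that Lemma \ref{lemma:derivative Mbeta} applies to both $f_j$ and $g_{R,j}$ with the same good ball and that the gradient identity transfers, which relies on the weak differentiability of $\CM_\beta g_{R,j}$ guaranteed by \eqref{sobolev boundedness hyp} applied to $g_{R,j}$ (and on $g_{R,j}$ being radial when required, which is why we chose $\eta_R$ radial in that setting). Modulo this, combining the bounds on $\Omega_{1,j}$, $\Omega_{2,j}$ and the tail bound for $\nabla \CM_\beta f$ via the triangle inequality would yield the stated conclusion uniformly for $j \geq j_\varepsilon$.
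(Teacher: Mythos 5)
Your argument is correct and follows the same skeleton as the paper's proof: choose $K$ large enough to absorb most of $\|f\|_1$, $\|\nabla f\|_1$ and $\|\nabla \CM_\beta f\|_q$; split $(3K)^c$ according to whether the good ball for $f_j$ at $x$ reaches back towards the compact set; treat the ``far'' region by truncating $f_j$ near the origin and applying \eqref{sobolev boundedness hyp} to the truncation; and treat the ``near'' region by observing that $r_{x,j}\gtrsim |x|$ and invoking Lemma \ref{lemma:refinement KS}. Two of your implementations differ in ways worth recording. On the near region the paper continues from $|\nabla \CM_\beta f_j(x)|\lesssim \CM_\beta f_j(x)/|x|$ via H\"older's inequality, using the $L^p\to L^r$ boundedness of $\CM_\beta$ and the smallness of $\int_{K^c}|\CM_\beta f|^r$; your route, bounding $\intav_{B_{x,j}}|f_j|\lesssim r_{x,j}^{-d}\|f_j\|_1$ to get the pointwise decay $|x|^{\beta-1-d}\|f_j\|_1$ and integrating directly, is more elementary, requiring no Lebesgue-space mapping properties of $\CM_\beta$ but only $\|f_j\|_1/R$ small. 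On the far region the paper uses the sharp truncation $f_j\chi_{\R^d\setminus K}$ and estimates $\|\nabla(f_j\chi_{\R^d\setminus K})\|_1\lesssim \|(\nabla f_j)\chi_{\R^d\setminus K}\|_1$, which glosses over the singular boundary contribution on $\partial K$; your smooth cutoff $\eta_R$ handles this more carefully, at the modest price of the good-ball matching argument you describe (for which the radiality of $\eta_R$ in the range $0<\beta<1$ is indeed essential, and which requires, as in the paper's implicit step on its set $Y_1^j$, that the gradient identity of Lemma \ref{lemma:derivative Mbeta} be available for the truncated function). The only caveat, shared with the paper's own proof, is the case $\beta=0$, where these derivative formulas require the good radii to be bounded away from zero; on your $\Omega_{2,j}$ this is automatic from $r_{x,j}\gtrsim|x|$, and on $\Omega_{1,j}$ it deserves the same footnote-level care the paper gives it.
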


The above lemma may be applied in our case as the bound \eqref{sobolev boundedness hyp} is satisfied for the non-centered fractional maximal function $M_\beta$ acting on radial functions. As is mentioned above, it is remarked that it would also apply to the centered case, to general functions and to $\beta=0$ provided the hypothetical endpoint Sobolev bound \eqref{sobolev boundedness hyp} holds in such cases.

\begin{proof}
Let $1 < p <\frac{d}{d-1}$ and $r$ be such that $\frac{1}{r}=\frac{1}{p} - \frac{\beta}{d}$. As $f_j, f \in W^{1,1}$, one has $f_j, f \in L^p$, and by the boundedness of $\CM_\beta$  one has
\begin{equation}\label{eq:boundedness hyp}
\| \CM_\beta f \|_r \lesssim \| f \|_p.
\end{equation}
Given $\varepsilon>0$, let $K$ be a compact set satisfying
\begin{equation}\label{smallness conditions}
    \int_{K^c} |f| < \varepsilon, \:\: \int_{K^c} |\nabla f| < \varepsilon, \:\:
\int_{K^c} |\CM_\beta f|^r < \varepsilon^r \:\: \textrm{and} \:\: \int_{K^c} |\nabla \CM_\beta f|^q < (\varepsilon/2)^q
\end{equation}
for some $r>q$; note that the two last conditions follow from \eqref{eq:boundedness hyp} and the hypothesis \eqref{sobolev boundedness hyp}. Moreover, let $j_\varepsilon>0$ be such that
\begin{equation}\label{jepsilon}
\| f_j - f \|_{L^1(\R^d)} < \varepsilon \qquad \textrm{and} \qquad \| \nabla f_j - \nabla f \|_{L^1(\R^d)} < \varepsilon
\end{equation}
for all $j \geq  j_\varepsilon$.

For every $j \geq j_\varepsilon$ write $(3K)^{c}=Y^{j}_{1}\cup Y^{j}_{2},$ where $Y^{j}_{1}:=\{x \in (3K)^{c} : K\cap B_{x,j}=\emptyset\}$ and $Y^{j}_{2}=(3K)^{c}\setminus Y^{j}_{1}$. By the triangle inequality and the last condition in \eqref{smallness conditions} it suffices to show
$$
\int_{(3K)^c} |\nabla \CM_\beta f_j|^{q} < (\varepsilon/2)^q \qquad \textrm{for all $j \geq j_\varepsilon$.}
$$ 

On $Y_1^j$ one may replace $f_j$ by $f_j \chi_{\R^d \backslash K}$. Using \eqref{sobolev boundedness hyp}, \eqref{smallness conditions} and \eqref{jepsilon},
\begin{align*}
\int_{Y_1^j} |\nabla \CM_\beta f_j(y)|^q \dy & \leq \int_{\R^d} |\nabla \CM_\beta (f_j \chi_{\R^d \backslash K})(y)|^q \dy  \\
&\lesssim \| \nabla (f_j \chi_{\R^d \backslash K}) \|_{L^1(\R^d)}^q \\
& \lesssim \|(\nabla  f_j)  \chi_{\R^d \backslash K} \|_{L^1(\R^d)}^q \\
& \leq \|\nabla  f_j  - \nabla f \|_{L^1(\R^d)}^q +\|(\nabla  f)  \chi_{\R^d \backslash K} \|_{L^1(\R^d)}^q \\
& \lesssim 2 \varepsilon^q
\end{align*}
for all $j \geq j_\varepsilon$.

If $x \in Y_2^j$ one has $r_{x,j}>|x|/3$. This and Lemma \ref{lemma:refinement KS} imply\footnote{Note that for $\beta=0$, if $x \in Y_2^j$ then $0 \not \in \CR_x^0$, and Lemma \ref{lemma:refinement KS} can safely be applied in this case.}
$$
| \nabla \CM_{\beta}f_j(x)|  \leq \frac{C(d,\beta)}{r_{x,j}} \CM_{\beta}f_j(x) \leq  \frac{3C(d,\beta)}{|x|}\CM_{\beta}f_j(x).
$$
For $p$ and $r$ as above, note that $r>q$ and $\frac{qr}{r-q}>d$. Then, by Hölder's inequality, \eqref{eq:boundedness hyp}, \eqref{smallness conditions} and \eqref{jepsilon},
\begin{align}
\int_{Y_2^j}\left| \nabla \CM_{\beta}f_j(x)\right|^{q}\dx & \lesssim  \Big(\int_{Y_2^j}(\CM_{\beta}f_j)^{r}\Big)^{\frac{q}{r}}\Big(\int_{Y_2^j} |x|^{\frac{-rq}{(r-q)}} \dx \Big)^{\frac{r-q}{r}} \label{x integral} \\
&\lesssim   \left( \Big(\int_{Y_2^j}(\CM_{\beta}(f_j-f))^{r}\Big)^{\frac{1}{r}} + \Big(\int_{Y_2^j}(\mathcal{M}_{\beta}f)^{r}\Big)^{\frac{1}{r}}  \right)^{q}\notag \\
&\lesssim (\|f_j - f\|_{L^p(\R^d)} + \varepsilon)^{q} \notag \\
&\leq (2\epsilon)^q \notag
\end{align}
for all $j \geq j_{\varepsilon}$, as the values of $q$ and $r$ ensure that the second integral in \eqref{x integral} is uniformly finite provided $K$ contains the unit ball. Reverse engineering the choice of $\varepsilon$ in \eqref{smallness conditions} and \eqref{jepsilon} concludes the proof.
\end{proof}




\subsection{Concluding the argument: Proof of Theorem \ref{thm:beta smaller than 1}}

This is now a simple consequence of Propositions \ref{prop:convergence compact} and \ref{prop:smallness}. Given $\varepsilon>0$, by Proposition \ref{prop:smallness} there exist a compact set $K$ and $j_{\varepsilon,1}>0$ such that
$$
\| \nabla M_\beta f_j - \nabla M_\beta f \|_{L^q((3K)^c)} < \varepsilon/2
$$
for all $j \geq j_{\varepsilon,1}$. As $3K$ is itself a compact set, Proposition \ref{prop:convergence compact} shows that there exists $j_{\varepsilon,2}>0$ such that
$$
\| \nabla M_\beta f_j - \nabla M_\beta f \|_{L^q(3K)} < \varepsilon / 2
$$
for all $j \geq j_{\varepsilon,2}$. Therefore
$$
\| \nabla M_\beta f_j - \nabla M_\beta f \|_{L^q(\R^d)} < \varepsilon
$$
for all $j \geq \max \{j_{\varepsilon,1}, j_{\varepsilon,2} \}$, as desired.




\section{The case $M_\beta^c$ if $d=1$: Proof of Theorem \ref{thm:d=1}}\label{sec:proof thm d=1}

As in the previous section, we first use Proposition \ref{prop:smallness} to show that it suffices to see the convergence inside any compact set $K$. The convergence in the compact set then follows from adapting the ideas for the non-centered $M_\beta$ used by the second author in \cite[Theorem 1]{Madrid2017} or in the proof of Proposition \ref{prop:convergence compactMI}. Note that it is crucial that if $d=1$, $\|f_j-f\|_{W^{1,1}(\R^d)} \to 0$ as $j \to \infty$ ensures uniform convergence.

It is important to note that the monotonicity arguments used in \cite{Madrid2017} to show smallness of $(M_\beta f)'$ outside a compact set do not adapt to the centered maximal operator $M_\beta$ and therefore Proposition \ref{prop:smallness} plays a crucial rôle here.




\appendix

\section{The case $\beta \in (d-1 , d)$}\label{app: (d-1,d)}

The goal of this appendix is to show the limitations of the one dimensional techniques in \cite{Madrid2017}, which only extend to higher dimensions in the limited range $ \beta \in ( d-1, d)$; note that this range is already subsumed by Theorem \ref{thm:beta bigger than 1}.

Let $f \in W^{1,1}$ and $\{f_j\}_{j \in \N} \subset W^{1,1}$ such that $\| f_j - f \|_{W^{1,1}(\R^d)} \to 0$ as $j \to \infty$. By Sobolev embedding and interpolation with $L^1$, one has $\| f_j - f \|_{L^p(\R^d)} \to 0$ for all $1 \leq p \leq \frac{d}{d-1}$ as $j \to \infty$. Note that for $p=d/\beta$ and any ball $B_r$ of radius $r$,
$$
r^\beta \intav_{B_r} |f(y)|\dy \leq  \Big( \int_{\R^d} |f(y)|^p\dy \Big)^{1/p},
$$
so
$$
|M_\beta f_j(x)-M_\beta f(x)| \leq |M_\beta(f_j-f)(x)| \leq \| f_j - f \|_{d/\beta} \to 0 \quad \textrm{as $j \to \infty$}
$$
for all $x \in \R^d$ provided $1 \leq d/\beta \leq \frac{d}{d-1}$, which requires $d-1 \leq \beta < d$. Thus, in this regime of $\beta$, there is uniform convergence of $M_\beta f_j$ to $M_\beta f$. Interpolation with the convergence of $M_\beta f_j$ to $M_\beta f$ in $L^{\frac{d}{d-\beta}, \infty}(\R^d)$, which holds by assumption, yields the convergence on $L^r(\R^d)$ with $\frac{d}{d-\beta} < r \leq  \infty$.

The convergence inside any compact set $K$ of $\nabla M_\beta f_j$ to $\nabla M_\beta f$ on $L^{d/(d-\beta)}$ follows as in the proof of Proposition \ref{prop:convergence compactMI} with some minor modifications. First, it is not needed to remove a small ball near the origin, as in the range $d-1 \leq \beta < d$ there is uniform convergence\footnote{The uniform convergence is a key point in the argument to relate the constants $C_{A,j}$ and $C_A$, and it is not available if $0<\beta<d-1$.} of $M_\beta f_j$ to $M_\beta$ in $\R^d$. Therefore, the compact set can be treated all in one go and can be analysed as the annulus $A(a,b)$. The lack of convergence $\| f_j - f \|_{L^\infty(K)}$ can be overcome replacing the bound \eqref{bound xx} by 
%
%
$$
C_{K}/2 \leq  r_{x,j}^\beta  \intav_{B_{x,j}} |f_j| \leq r_{x,j}^{\alpha} \| f_j \|_{s'} \lesssim r_{x,j}^{\alpha} \| f \|_{s'}
$$
which now holds for $j > \max \{j_1(K), j_2(K)\}$ where $j_2(K)$ is large enough so that $\| f_j - f \|_{s'} \leq \| f \|_{s'}$ for $1 \leq s' \leq \frac{d}{d-1}$, where $\alpha:=\beta - d/s'$. Note that if $\alpha= \beta-d/s'  >0$, one has the uniform lower bound $r_{x,j} \gtrsim (C_K)^{1/\alpha}=: \bar C_K>0$. Thus, it is required that $s'>d/\beta$ and $s'<\frac{d}{d-1}$, which holds if $d-1<\beta<d$.\footnote{The required conditions on $s$ do not allow to obtain the case $\beta=d-1$; in particular, this method does not yield results for the classical case $\beta=0$ if $d=1$.} This immediately yields the desired uniform lower bound on the radius and the convergence in the compact set can be concluded as in Proposition \ref{prop:convergence compactMI}.


In order to show smallness outside a compact set $K$, one can argue as in Proposition \ref{prop:smallness} or, more directly, appeal to the Kinnunen--Saksman inequality \eqref{KS} instead of its refined version in Lemma \ref{lemma:refinement KS}, which is at our disposal in the range $\beta \in (d-1,d)$ for $d>1$, yielding
$$
\int_{K^c} |\nabla M_\beta f_j|^q \lesssim \int_{K^c} |M_{\beta - 1} f|^q + \int_{K^c} |M_{\beta-1} (f-f_j)|^q.
$$
As $f \in L^{\frac{d}{d-1}}(\R^d)$, one has $M_{\beta-1} f \in L^q(\R^d)$ and one can then choose $K$ so that $\int_{K^c} |M_{\beta - 1} f|^q < \varepsilon^q$. For the second term, one can use the boundedness of $M_{\beta -1}$ and the convergence of $f_j$ to $f$ in $L^{\frac{d}{d-1}}$ as $j \to \infty$ to conclude
$$
\int_{K^c} |M_{\beta-1} (f-f_j)|^q \lesssim \Big( \int_{\R^d} |f-f_j|^{\frac{d}{d-1}}\Big)^{q(d-1)/d} \lesssim \varepsilon^q
$$
provided $j$ is large enough.

Finally, it is remarked that the inequality \eqref{KS} does not yield a favourable estimate in one dimension to show smallness outside a compact set. Instead, given a fixed compact set $K=[-R,R]$, the argument in \cite{Madrid2017} for $d=1$ splits 
$\R^d \backslash 3K = Y_1^{j} \cup Y_2^{j}$, where $Y_1^{j}:= \{ x \not \in 3K : |R| \not \in B_{x,j} \}$ and $Y_2^j$ is the complementary set in $\R^d \backslash 3K$. The smallness in $Y_1^j$ is obtained as in Proposition \ref{prop:smallness}. However, to show smallness on $Y_2^j$, the author makes use of the fundamental theorem of calculus after observing some monotonocity properties satisfied $M_\beta f$; this is very attached to the case $d=1$ and does not extend to higher dimensions or the centered case $M_\beta^c$. The more general Proposition \ref{prop:smallness} now subsumes the one dimensional case in \cite{Madrid2017}.




\bibliography{Reference}
\bibliographystyle{amsplain}

\end{document}